\numberwithin{equation}{section}
\newtheorem{lemma}[equation]{Lemma}
\newtheorem{thm}[equation]{Theorem}
\newtheorem{conjecture}[equation]{Conjecture}
\newtheorem{cor}[equation]{Corollary}
\newtheorem{prop}[equation]{Proposition}
\newtheorem{example}[equation]{Example}
\newtheorem{defi}[equation]{Definition}
\theoremstyle{remark}
\newtheorem{remark}[equation]{Remark}
\renewcommand{\bar}[1]{#1\llap{$\overline{\phantom{\rm#1}}$}}
\newcommand{\lra}{\longrightarrow}
\DeclareMathOperator{\ord}{{ord}}
\newcommand{\hhat}{\widehat{h}}
\DeclareMathOperator{\supp}{{Supp}}
\newcommand{\N}{{\mathbb N}}
\newcommand{\Z}{{\mathbb Z}}
\newcommand{\Q}{{\mathbb Q}}
\newcommand{\R}{{\mathbb R}}
\newcommand{\C}{{\mathbb C}}
\newcommand{\Kbar}{{\bar{K}}}
\newcommand{\cB}{\mathcal{B}}
\newcommand{\cC}{\mathcal{C}}
\newcommand{\cS}{\mathcal{S}}
\newcommand{\cY}{\mathcal{Y}}
\newcommand{\cZ}{\mathcal{Z}}
\newcommand{\cD}{\mathcal{D}}
\newcommand{\cE}{\mathcal{E}}
\newcommand{\cM}{\mathcal{M}}
\newcommand{\cP}{\mathcal{P}}
\newcommand{\cK}{\mathcal{K}}
\newcommand{\fp}{\mathfrak p}
\newcommand{\fq}{\mathfrak q}
\newcommand{\bfA}{{\mathbf A}}
\newcommand{\fo}{\mathfrak o}
\newcommand{\bP}{{\mathbb P}}
\newcommand{\cO}{\mathcal{O}}
\DeclareMathOperator{\id}{id}
\begin{document}



\title[Squarefree Doubly Primitive Divisors]{Squarefree Doubly Primitive Divisors in Dynamical Sequences}

\author{D.~Ghioca}
\address{
Dragos Ghioca\\
Department of Mathematics\\
University of British Columbia\\
Vancouver, BC V6T 1Z2\\
Canada
}
\email{dghioca@math.ubc.ca}

\author{K.~D.~Nguyen}
\address{
Khoa D.~Nguyen
Department of Mathematics\\
University of British Columbia\\
Vancouver, BC V6T 1Z2\\
Canada
}
\email{dknguyen@math.ubc.ca}

\author{T.~J.~Tucker}
\address{
Thomas Tucker\\
Department of Mathematics\\ 
University of Rochester\\
Rochester, NY 14627\\
USA
}
\email{thomas.tucker@rochester.edu}

\keywords {write keywords}
\subjclass[2010]{Primary 11G50; Secondary 14G99}

\thanks{The first author was partially supported by an NSERC Discovery Grant. The second author thanks the Pacific Institute of Mathematical Sciences for its generous support. The third
  author was partially supported by  an NSF grant.}


 \begin{abstract}
Let $K$ be a number field or a function field of characteristic $0$, let $\varphi\in K(z)$ with $\deg(\varphi)\geq 2$, and let $\alpha\in \bP^1(K)$. Let $S$ be a finite set
of places of $K$ containing all the archimedean ones and 
the primes where $\varphi$ has bad reduction. 
After excluding all the natural counter-examples, 
we define a subset $\bfA(\varphi,\alpha)$ of
$\N_0\times\N$  and show that for all but finitely many
$(m,n)\in \bfA(\varphi,\alpha)$ there is a prime
$\fp\notin S$ such that 
$\ord_{\fp}(\varphi^{m+n}(\alpha)-\varphi^m(\alpha))=1$
and $\alpha$ has portrait $(m,n)$ under the action of
$\varphi$ modulo $\fp$. 
This latter condition implies 
$\ord_{\fp}(\varphi^{u+v}(\alpha)-\varphi^u(\alpha))\leq 0$
for $(u,v)\in \N_0\times\N$ satisfying
$u<m$ or $v<n$.
Our proof assumes a conjecture of Vojta
for $\bP^1\times\bP^1$ in the number field case and is unconditional in the function field case thanks to a deep theorem of Yamanoi. This paper extends earlier work of
Ingram-Silverman, Faber-Granville, and the authors.	
\end{abstract}

\maketitle

\section{Introduction}
\label{intro}

Throughout this paper, $\N$ denotes the set of positive integers and $\N_0=\N\cup\{0\}$.
We start by recalling the definition from \cite{portrait} for a \emph{preperiodicity portrait}, or simply \emph{portrait} for a rational function $\varphi$ defined over a field $F$ and a point $\alpha\in \bP^1(F)$. We say that $(m,n)\in \N_0\times \N$ is the portrait of $\alpha$ (under $\varphi$) if  $\varphi^{m}(\alpha)$ is periodic of minimum period $n$ under $\varphi$, while $m$ is the smallest nonnegative integer such that $\varphi^{m}(\alpha)$ is periodic (as always in dynamics, we denote by $\varphi^k=\varphi\circ \cdots \circ \varphi$ composed with itself $k$ times). We call $m$ the  \emph{preperiod} of
$\alpha$ and call $n$ the \emph{minimum} or \emph{exact period} of $\alpha$.

From now on, let $K$ be either a number field or a function field of transcendence
degree $1$ over an algebraically closed field $\kappa$ of characteristic $0$. By a place
$\fp$
of $K$, we mean an equivalence class of absolute values on $K$ that are trivial on the constant field $\kappa$ if $K$ is a function field.  Let $M_K$ denote
the set of places of $K$. 
Each nonarchimedean place $\fp$ gives rise to a valuation ring $\fo_\fp$
and a maximal ideal denoted (by an abuse of notation) by $\fp$; we let $M_K^0$ be the set of all nonarchimedean places of $K$ (also called \emph{finite} places). We denote by $k_\fp$ the residue field of $K$ at the place $\fp\in M_K^0$; if $K$ is a function field, then $k_\fp$ is canonically isomorphic to $\kappa$.  
For $\fp\in M_K^0$, let
$\ord_\fp$ denote the additive  
valuation on $K$ normalized so that $\ord_\fp(K)=\Z\cup\{\infty\}$. 
For every $\fp\in M_K^0$, we
have a well-defined reduction map $r_\fp:\bP^1(K)\lra \bP^1(k_\fp)$. Given a non-constant function $\varphi(x)\in K(x)$, for all but finitely many $\fp\in M_K^0$, by ``reducing the coefficients of $\varphi$ modulo $\fp$'', the morphism
$\varphi:\ \bP^1_K\rightarrow \bP^1_K$
induces a morphism $\bar{\varphi}:\ \bP^1_{k_\fp}\rightarrow\bP^1_{k_\fp}$
of the same degree
such that $\bar{\varphi}(r_\fp(a))=r_\fp( \varphi(a))$
for every $a\in\bP^1(K)$
 and we say that $\varphi$ has good reduction modulo $\fp$ (see \cite[Chapter~2]{Silverman-book}). 
For $\fp\in M_K^0$, if $K$ is a number field, let
$N_{\fp}=\log(\# k_\fp)$ ; otherwise, let $N_{\fp}=1$.
For every finite subset $S$ of $M_K$, let 
$N_S:=\displaystyle\sum_{\fp\in S\cap M_K^0} N_{\fp}$.
Let $h_K$ denote the Weil height (over $K$)
on $\bP^1(\bar{K})$ and for every $\varphi(x)\in K(x)$
having degree at least $2$, we can
define the canonical height
$\hhat_{\varphi,K}$ on $\bP^1(\bar{K})$. The readers
are referred to Section~\ref{sec:absv and heights}
or \cite[Chapter~3]{Silverman-book} for more details.

Given a sequence $(a_n)_{n\geq 0}$ of elements of $K$, we may ask if for every (sufficiently large) $n$ there exists a prime $\fp$ such that $\ord_\fp(a_n)>0$
while $\ord_\fp(a_m)\leq 0$ for every $m<n$. Such primes
are called primitive divisors of the sequence $(a_n)$.
This highly interesting question has a long history starting from
work of Bang \cite{Bang}, Zsigmondy \cite{Zsigmondy},
and Schinzel \cite{Schinzel} in the context of
the multiplicative group to further work
in the setting of elliptic curves (for examples, see \cite{EMW} and \cite{Ingram2007}).
First results in the context of arithmetic dynamics where
$a_n=\varphi^n(\alpha)$ for a given $\varphi(x)\in K(x)$
and $\alpha\in \bP^1(K)$ are obtained by Ingram and Silverman \cite{IS}. After \cite{IS}, there are various papers on primitive divisors in dynamical sequences
\cite{FG}, \cite{DH2012}, \cite{Krieger}, and especially
\cite{GNT2013} in which the existence of primitive
divisors is established for the function field case and  conditionally on ABC for the number field case. Analogous
results for the arithmetic dynamics of higher dimensional
varieties are obtained by Silverman \cite{Silverman2013}
assuming Vojta's conjecture for projective spaces.

A harder question asked by Ingram-Silverman
is the existence of a prime $\fp$ such that
$\ord_{\fp}(\varphi^{m+n}(\alpha)-\varphi^m(\alpha))>0$
while $\ord_{\fp}(\varphi^{u+v}(\alpha)-\varphi^v(\alpha))\leq 0$ if $u<m$ or $v<n$. Such primes
are called doubly primitive divisors (for the double sequence $a_{m,n}=\varphi^{m+n}(\alpha)-\varphi^n(\alpha)$) by Faber and Granville \cite{FG} who discovered
certain counter-examples to the question by Ingram-Silverman and proposed a modified question. In \cite{portrait}, we answer a variant of the Ingram-Silverman-Faber-Granville question for function fields
(and the proof in \cite{portrait} can be adapted to settle the number field case assuming ABC). More
explicit results for the special case of unicritical 
polynomials are obtained in recent work of Doyle
\cite{Doyle2016}, \cite{DoylePreprint}.

In number theory, whenever one has $\ord_{\fp}(a)>0$, it
is natural to ask whether $\ord_{\fp}(a)=1$ (i.e. whether $\fp$ is a squarefree factor of $a$). In fact, the
existence of \emph{squarefree} primitive divisors
in the sequence $(a_n:=\varphi^n(\alpha))_{n\geq 0}$
is also proved in \cite{GNT2013} and has an interesting
application to the structure of certain iterated Galois groups \cite[Section~6]{GNT2013}. The aim of this paper
is to study the existence of the so-called 
squarefree doubly primitive divisors:

\begin{defi}\label{def:squarefree doubly}
Let $\varphi(x)\in K(x)$ with $\deg(\varphi)\geq 2$
and let $\alpha\in\bP^1(K)$ which is not $\varphi$-periodic. Let $\fp$ be a prime of good reduction.
We say that $\alpha$ has portrait $(m,n)$ modulo $\fp$
if $r_{\fp}(\alpha)$ has portrait $(m,n)$
under the reduction $\bar{\varphi}$ of $\varphi$.
If, in addition, $\infty\notin\{\varphi^{m+n}(\alpha),\varphi^m(\alpha)\}$ and $\ord_{\fp}(\varphi^{m+n}(\alpha)-\varphi^m(\alpha))=1$ then we say that
$\alpha$ has \emph{squarefree} portrait $(m,n)$ modulo $\fp$.
\end{defi}
If $\alpha$ has portrait $(m,n)$ modulo $\fp$
then $\ord_{\fp}(\varphi^{u+v}(\alpha)-\varphi^u(\alpha))\leq 0$ when $u<m$ or $v<n$; this explains the connection to the concept of doubly primitive divisors in Faber-Granville \cite{FG}. Given $(m,n)$, the existence of
$\fp$ such that $\alpha$ has squarefree portrait $(m,n)$
modulo $\fp$ is also of interest in complex dynamics. Perhaps the simplest example is the well-known result of Gleason that when $K=\C(t)$, $\varphi(x)=x^2+t$, and
$\alpha=0$, for every $n\in\N$, the Gleason polynomial 
$\varphi^{n}(\alpha)-\alpha\in \C[t]$
has only simple roots. To illustrate the kind of results proved in this paper without introducing several technical definitions, we state the following theorem:

\begin{thm}\label{thm:main}
	Let $K$ be a number field or a function field. 
	In the number field case, assume Vojta's conjecture
	for $\bP^1\times\bP^1$ (see Conjecture~\ref{conj:surface}). 
	Let $\varphi(x)\in K(x)$ with 
	$d:=\deg(\varphi)\geq 2$, let 
	$S$ be a finite set of places of 
	$K$
	containing the archimedean ones and all primes of
	bad reduction of $\varphi$, and let 
	$\tau$ be a positive number.
	Then there is a constant $C_1=C_1(K,\varphi,N_S,\tau)$
	depending only on $K$, $\varphi$, $N_S$, and
	$\tau$ such that the following holds. 
	For every $\alpha\in \bP^1(K)$
	such that $\hhat_{\varphi,K}(\alpha)\geq \tau$,
	for all positive integers $m>C_1$
	and $n>C_1$,
	there is a prime $\fp\in M_K\setminus S$ such that
	$\alpha$ has squarefree portrait $(m,n)$ modulo 
	$\fp$.
\end{thm}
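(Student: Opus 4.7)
The plan is to combine three ingredients: the geometric growth of $h_K(\varphi^{m+n}(\alpha)-\varphi^m(\alpha))$ coming from the canonical-height lower bound $\hhat_{\varphi,K}(\alpha)\geq\tau$; Vojta's conjecture on $\bP^1\times\bP^1$ (Conjecture~\ref{conj:surface}) in its truncated-counting-function form, used to force $\varphi^{m+n}(\alpha)-\varphi^m(\alpha)$ to be essentially squarefree outside $S$; and a pigeonhole step that peels off primes at which the portrait of $\alpha$ is strictly smaller than $(m,n)$. In the function field case one substitutes Yamanoi's theorem for Vojta with the same downstream effect.

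First I would set $x_m=\varphi^m(\alpha)$ and $y_{m,n}=\varphi^{m+n}(\alpha)$ and work with the pair $P_{m,n}=(x_m,y_{m,n})\in\bP^1\times\bP^1$. Standard comparisons between Weil and canonical heights give the total-mass estimate
\[
h_K\bigl(\varphi^{m+n}(\alpha)-\varphi^m(\alpha)\bigr)\geq(d^{m+n}-d^m)\tau-O(1),
\]
which is the amount of $N$-weight that must be accounted for across the various primes. Applying Conjecture~\ref{conj:surface} to $P_{m,n}$ with the diagonal $\Delta$, together with the lines at infinity so as to form a normal-crossings divisor $D$ with $K_{\bP^1\times\bP^1}+D$ big, then yields a bound of the form
\[
h_\Delta(P_{m,n})-N_S^{(1)}(\Delta,P_{m,n})\leq\varepsilon\,h_K(P_{m,n})+O_\varepsilon(1)
\]
off a proper Zariski closed exceptional set $Z_\varepsilon\subset\bP^1\times\bP^1$ depending only on $\varepsilon,K,\varphi$. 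Here $N_S^{(1)}$ is the truncated counting function at level $1$, so this is precisely the statement that the discrepancy between $h_K(y_{m,n}-x_m)$ and the $N$-mass carried by primes of $\ord_\fp$ exactly $1$ is at most a small fraction of the total. The exceptional $Z_\varepsilon$ is handled using the positivity of $\hhat_{\varphi,K}(\alpha)$: any irreducible curve in $\bP^1\times\bP^1$ meeting the orbit $\{P_{m,n}\}$ infinitely often would force a nontrivial algebraic relation among iterates of $\varphi$ at $\alpha$, which is incompatible with $\alpha$ being wandering with positive canonical height.

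To finish, I would subtract the $N$-mass coming from primes $\fp\notin S$ at which the portrait of $\alpha$ modulo $\fp$ is strictly smaller than $(m,n)$. Every such prime divides some $\varphi^{u+v}(\alpha)-\varphi^u(\alpha)$ with $u<m$ or $v<n$ (by the remark following Definition~\ref{def:squarefree doubly}), and summing $h_K$ of these differences over the $O(mn)$ relevant pairs yields an $O(d^{m+n-1}\tau)$ bound, roughly a factor of $d$ below the main term. Choosing $\varepsilon$ small relative to $1-1/d$ and $m,n>C_1$ large enough to absorb the $O_\varepsilon(1)$ errors then leaves a positive residual mass on primes of valuation exactly $1$ and exact portrait $(m,n)$, forcing existence of at least one such prime. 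The hard part will be the uniform exceptional-set analysis: guaranteeing that only finitely many $(m,n)$ land in $Z_\varepsilon$, with a threshold depending only on $K,\varphi,N_S,\tau$. This is a dynamical-rigidity input about intersecting the two-parameter orbit $\{(\varphi^m(\alpha),\varphi^{m+n}(\alpha))\}$ with arbitrary curves in $\bP^1\times\bP^1$, and is where the canonical height machinery of Section~\ref{sec:absv and heights} must be deployed systematically to convert a non-uniform Vojta input into a threshold $C_1$ that depends only on the listed parameters.
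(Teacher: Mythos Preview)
The central gap is in the Vojta step. With $D$ built from the diagonal $\Delta$ plus the two lines at infinity, $D$ has bidegree $(2,2)$ and hence $\cK(D)\cong\cO(0,0)$: Conjecture~\ref{conj:surface} then gives only $N^{(1)}(D,P)\geq -\epsilon\, h(P)-O(1)$, which is vacuous. Your displayed inequality $h_\Delta(P_{m,n})-N^{(1)}(\Delta,P_{m,n})\leq\epsilon\, h(P_{m,n})+O(1)$ would require $\cK(\Delta)\cong\cO(\Delta)$, i.e.\ trivial canonical class, which is false on $\bP^1\times\bP^1$; there is also a normal-crossings failure at $(\infty,\infty)$. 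The paper's remedy is an amplification: one applies Vojta not at $(\varphi^m(\alpha),\varphi^{m+n}(\alpha))$ against the diagonal, but at the \emph{shifted} point $(\varphi^{m+n-\ell}(\alpha),\varphi^m(\alpha))$ against the graph divisor $D_\ell=\{y=\varphi^\ell(x)\}$ of bidegree $(d^\ell,1)$. Now $\cK(D_\ell)\cong\cO(d^\ell-2,-1)$, and Proposition~\ref{prop:uniform}/Corollary~\ref{cor:double sequence} yield $\sum_{\fp\in\cD}N_\fp\geq(d^\ell-3)d^{m+n-\ell}\tau-O(d^m\tau)$. Combined with the elementary upper bound $\sum_{\fp\in\cD_{(1)}}N_\fp+2\sum_{\fp\in\cD\setminus\cD_{(1)}}N_\fp\leq h_K(\varphi^{m+n}(\alpha)-\varphi^m(\alpha))\leq d^{m+n}\tau+O(d^m\tau)$, one extracts $\sum_{\fp\in\cD_{(1)}}N_\fp\geq(1-6/d^\ell)d^{m+n}\tau-\cdots$. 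After subtracting bad primes at cost $\sim d^{m+n-1}\tau$, one needs $1-6/d^\ell-1/d>0$; this forces $\ell\geq 4$ when $d=2$, which is why the paper fixes $\ell=4$ throughout Section~\ref{sec:proof main}. The exceptional-set analysis you flag as hard is absorbed into Corollary~\ref{cor:double sequence}: horizontal components of $Z$ via the one-variable Proposition~\ref{prop:Roth-abc}, and non-trivial curve components via the height disparity $\hhat(\varphi^{m+n-\ell}(\alpha))=d^{n-\ell}\hhat(\varphi^m(\alpha))$.

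A secondary imprecision: summing heights over ``$O(mn)$ relevant pairs $(u,v)$'' does not give $O(d^{m+n-1}\tau)$; a naive sum over all $(u,v)\neq(m,n)$ with $u\leq m$, $v\leq n$ is of order $d^{m+n}\tau$ and swamps the main term. The paper reduces to two small sets: $\cE_1=\{\fp:r_\fp(\varphi^{m+n-1}(\alpha))=r_\fp(\varphi^{m-1}(\alpha))\}$, which by Lemma~\ref{lem:easy} captures every $\fp\in\cD$ with preperiod $<m$, and $\cE_2=\bigcup_{p\mid n}\{\fp:r_\fp(\varphi^{m+n/p}(\alpha))=r_\fp(\varphi^m(\alpha))\}$ over primes $p$, which captures every $\fp\in\cD$ with exact period a proper divisor of $n$. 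Their masses are $O(d^{m+n-1}\tau)$ and $O((\log_2 n)\,d^{m+n/2}\tau)$ by \eqref{eq:new Liouville}, and that is what makes the final subtraction succeed.
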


\begin{remark}
We emphasize the fact that $C_1$ depends on a lower
bound $\tau$ of $\hhat_{\varphi,K}(\alpha)$ instead 
of $\alpha$ itself. This means our results (Theorem~\ref{thm:main} and Theorem~\ref{thm:extra}) are essentially uniform in $\alpha$. Indeed, if $K$ is a number field,
Northcott's principle gives a positive
lower bound on the canonical height of
wandering points in $\bP^1(K)$. This also holds in 
the function field case as long as $\varphi$ is not isotrivial thanks to a result of Baker \cite{Baker2009}. 
\end{remark}

While Theorem~\ref{thm:main} is effective in the function field case, its effectiveness in the number field case depends on the effectiveness of Vojta's conjecture
for $\bP^1\times\bP^1$ (Conjecture~\ref{conj:surface}). Nevertheless, even if Theorem~\ref{thm:main} is effective, the 
resulting $C_1$ should be large and the theorem does not say anything about small values of $m$ or $n$. For example, fix $m=2017$, we cannot conclude from Theorem~\ref{thm:main} that for all sufficiently large $n$, there is $\fp$ such that $\alpha$ has squarefree portrait $(2017,n)$. \emph{The ultimate goal of this paper} is to
identify a subset $\bfA(\varphi,\alpha)$
of $\N_0\times\N$ by excluding all the natural counter-examples and prove that for all but finitely many $(m,n)\in \bfA(\varphi,\alpha)$, there is $\fp\in M_K^0\setminus S$ such that $\alpha$ has squarefree portrait $(m,n)$ modulo $\fp$. First, we need the following definition taken from \cite{GNT2013}:

\begin{defi}\label{def:dynamically unramified}
Let $\varphi(x)\in K(x)$ and $a\in\bP^1(\bar{K})$. We say
that $\varphi$ is dynamically unramified over $a$ if
there are infinitely many $b\in\bP^1(\bar{K})$
such that for some $n\in\N$, $\varphi^n(b)=a$
and $\varphi^n$ is unramified at $b$.
In other words, $a$ has an infinite
backward orbit consisting of unramified points
of $\varphi$.
\end{defi}

We are now able to define the set of ``admissible'' $(m,n)$ which could possibly become a squarefree portrait
after reducing modulo some prime $\fp$:
\begin{defi}\label{def:A1A2A}
Let $\varphi\in K(x)$ with $\deg(\alpha)\geq 2$ and 
let $\alpha\in \bP^1(K)$ that is not $\varphi$-preperiodic.
\begin{itemize}
\item [(i)] Let $A_1(\varphi,\alpha)$ be the set of
$m\in\N_0$ satisfying the following condition. If $m=0$, we
require that $\varphi$ is dynamically unramified over $\alpha$; if $m\geq 1$, we require that there exists
$\eta\in\bP^1(\bar{K})\setminus\{\varphi^{m-1}(\alpha)\}$ such that $\varphi(\eta)=\varphi^m(\alpha)$, $\varphi$ is unramified at $\eta$, and dynamically unramified over $\eta$.

\item [(ii)] Let $A_2(\varphi)$ be the set
of $n\in \N$ satisfying the following condition. There is
a $\varphi$-periodic point
$\beta\in\bar{K}$ with exact period $n$ such that
$x-\beta$ is a squarefree factor of
$\varphi^n(x)-x$ and there is $\eta\in\bP^1(\bar{K})\setminus \{\varphi^{n-1}(\beta)\}$ such that
$\varphi(\eta)=\beta$, $\varphi$ is unramified at $\eta$,
and dynamically unramified over $\eta$.

\item [(iii)] Define $\bfA(\varphi,\alpha):=A_1(\varphi,\alpha)\times A_2(\varphi)$. 
\end{itemize}
\end{defi}

Note that this definition makes sense over any field (i.e. not necessarily a number field or function field).
When $m\geq 1$, the condition in Definition~\ref{def:A1A2A}(i) says that $\varphi^m(\alpha)$ has
an infinite backward orbit that starts from $\eta\neq\varphi^{m-1}(\alpha)$ and consists of unramified points.
The condition on $\eta$ in Definition~\ref{def:A1A2A}(ii)
says that $\beta$ has an infinite backward orbit that starts from $\eta\neq \varphi^{n-1}(\beta)$
and consists of unramified points. Roughly speaking,
the ``bad'' set $\N_0\setminus A_1(\varphi,\alpha)$
(respectively $\N\setminus A_2(\varphi)$)
is essentially the set of $m$ (respectively $n$) that
either belongs to the ``bad'' set $Y(\varphi,\alpha)$
(respectively $X(\varphi)$) defined in \cite[Definition~1.2]{portrait} or fails a certain
dynamical unramifiedness condition. Although Definition~\ref{def:A1A2A} looks slightly complicated, we briefly
explain, through counter-examples, why $\bfA(\varphi,\alpha)$ is the largest set (up to adding \emph{finitely many} elements of $\N_0\times\N$)
where we can hope for the existence of squarefree
doubly primitive divisors.

\begin{example}\label{eg:A1 is largest}
Consider $\varphi(x)=(x-\alpha)(x-\delta)^2$ (with $\delta\neq \alpha$) so that $1\notin A_1(\varphi,\alpha)$. The only pre-image of
$\varphi(\alpha)=0$ that is not $\alpha$ is $\delta$
over which $\varphi$ is ramified. Now $\varphi^{n+1}(x)=(\varphi^{n}(x)-\alpha)(\varphi^n(x)-\delta)^2$.
Therefore every squarefree divisor of 
$\varphi^{n+1}(\alpha)-\varphi(\alpha)=\varphi^{n+1}(\alpha)$
must be a divisor of $\varphi^n(\alpha)-\alpha$.
Hence $(1,n)$ is not a squarefree portrait after reducing any prime $\fp$.
\end{example}

\begin{example}\label{eg:A2 is largest}
For simplicity, assume $\varphi$ is a polynomial, and 
let $n\notin A_2(\varphi)$.
\begin{itemize}
\item [(a)] If $\varphi^n(x)-x$ does not have a 
squarefree factor (for instance, $n=1$ and $\varphi(x)=x+x^2$), then obviously $\varphi^{m+n}(\alpha)-\varphi^m(\alpha)$ does not have a squarefree factor.

\item [(b)] Now assume $\varphi^n(x)-x$ has a squarefree
factor, but every such factor is of the form $x-\beta'$
where the exact period of $\beta'$ is strictly smaller than $n$. Then every squarefree prime factor
$\fp$
of $\varphi^{m+n}(\alpha)-\varphi^m(\alpha)$
must be a factor of $\varphi^m(\alpha)-\beta'$, and hence
$r_{\fp}(\varphi^m(\alpha))=r_{\fp}(\beta')$
has exact period less than $n$.  Therefore $(m,n)$
cannot be a squarefree portrait.

\item [(c)] Let $\beta_1,\ldots,\beta_k$
be all the points of exact period $n$ such that
$x-\beta_i$ is a squarefree factor of $\varphi^n(x)-x$.
Assume that for each $\beta_i$, we fail
to have an $\eta$ as described in Definition~\ref{def:A1A2A}(ii).  So there is
$M\geq 1$ such that for every $i\in\{1,\ldots,k\}$, every squarefree factor
of $\varphi^M(x)-\beta_i$, if it exists, must have the form
$x-\delta$ where $\varphi^{M-1}(\delta)=\varphi^{n-1}(\beta_i)$. Let $m\geq M$, assume
that $\fp$
is a squarefree prime factor of $\varphi^{m+n}(\alpha)-\varphi^m(\alpha)$ and $r_\fp(\varphi^m(\alpha))$
has exact period $n$.
 By using the factorization of $\varphi^{M+n}(x)-\varphi^M(x)$ induced from the factorization of
$\varphi^n(x)-x$, we have that $\fp$ is a factor
of some $\varphi^{m-M}(\alpha)-\delta$
with $\varphi^{M-1}(\delta)=\varphi^{n-1}(\beta_i)$
for some $i\in\{1,\ldots,k\}$ as mentioned above. However, this implies $r_\fp(\varphi^{m-1}(\alpha))=r_\fp(\varphi^{M-1}(\delta))=r_{\fp}(\beta_i)$
which is also periodic. Hence $(m,n)$ cannot be the portrait
of $\alpha$ modulo $\fp$.
\end{itemize}
\end{example}

Having explained why the set $\bfA(\varphi,\alpha)$
is essentially best possible, we now state the main
result of this paper:
\begin{thm}\label{thm:extra}
	Let $K$, $\varphi$, $d$, $S$,
	and $\tau$ be as in Theorem~\ref{thm:main}. 
	In the number field case, assume Vojta's conjecture
	for $\bP^1\times\bP^1$ (see Conjecture~\ref{conj:surface}). 
	Then there is a finite subset $\Delta=\Delta(K,\varphi,N_S,\tau)$ of $\N_0\times\N$
	such that for every
	$\alpha\in\bP^1(K)$ with
	$\hhat_{\varphi,K}(\alpha)\geq\tau$,
	the following holds.
	Write $\bfA=\bfA(\varphi,\alpha)$;
    for every $(m,n)\in\bfA\setminus\Delta$ satisfying
    $\infty\notin\{\varphi^{m+n}(\alpha),\varphi^n(\alpha)\}$,
	there is a prime $\fp\in M_K\setminus S$ such that
	$\alpha$ has squarefree portrait $(m,n)$ modulo 
	$\fp$.
\end{thm}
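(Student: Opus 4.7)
The plan is to reduce Theorem~\ref{thm:extra} to Theorem~\ref{thm:main} by treating the boundary of $\bfA(\varphi,\alpha)$ where either $m$ or $n$ is small. Theorem~\ref{thm:main} supplies a constant $C_1=C_1(K,\varphi,N_S,\tau)$ such that every $(m,n)$ with $m>C_1$ and $n>C_1$ already admits a prime $\fp$ giving $\alpha$ squarefree portrait $(m,n)$, so I only need to handle the two ``strips'' $\{m\le C_1\}$ and $\{n\le C_1\}$. Since each strip contains only finitely many values of the small coordinate, it will be enough to prove two one-sided uniform statements: (A) for each fixed $m_0\le C_1$ with $m_0\in A_1(\varphi,\alpha)$, the conclusion holds for all but finitely many $n\in A_2(\varphi)$; and (B) for each fixed $n_0\le C_1$ with $n_0\in A_2(\varphi)$, it holds for all but finitely many $m\in A_1(\varphi,\alpha)$. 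The finite exceptional sets must depend only on $K,\varphi,N_S,\tau$. Taking $\Delta$ to be the union of $\{(m,n):m\le C_1,\ n\le C_1\}$ together with the finitely many exceptional pairs from (A) and (B) then assembles the desired finite set.

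For statement (A), fix $m_0\le C_1$ and assume $m_0\in A_1(\varphi,\alpha)$. Set $\gamma:=\varphi^{m_0}(\alpha)$ and fix the unramified witness $\eta$ from Definition~\ref{def:A1A2A}(i). For each $n\in A_2(\varphi)$, select a $\varphi$-periodic $\beta_n$ of exact period $n$ that is a simple root of $\varphi^n(x)-x$, together with its unramified preimage witness $\eta'_n$ from Definition~\ref{def:A1A2A}(ii). The task is to produce $\fp\notin S$ with $\ord_\fp(\gamma-\beta_n)=1$ for which $r_\fp(\alpha)$ has portrait exactly $(m_0,n)$. The existence of a squarefree prime factor of $\gamma-\beta_n$ should follow from a two-variable extension of the squarefree primitive divisor theorem of \cite{GNT2013}, obtained by applying Vojta's conjecture (Conjecture~\ref{conj:surface}) on a curve in $\bP^1\times\bP^1$ cut out by simultaneous preimage equations $\varphi^j(u)=\gamma$, $\varphi^k(v)=\beta_n$ together with the diagonal $u=v$, with $j,k$ chosen to push the heights apart. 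The dynamical unramifiedness of $\eta$ and $\eta'_n$ provides the logarithmic ramification divisor needed to make Vojta's inequality detect squarefreeness. Once such an $\fp$ is produced, the matching of portrait $(m_0,n)$ modulo $\fp$ follows in the forward direction from the same arithmetic considered in Examples~\ref{eg:A1 is largest} and~\ref{eg:A2 is largest}: the simplicity of $\beta_n$ in $\varphi^n(x)-x$ together with the condition on $\eta$ prevents any shorter period or preperiod from appearing.

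Statement (B) is symmetric but easier: fix $n_0\le C_1$ in $A_2(\varphi)$ together with its witnesses $\beta_{n_0}$ and $\eta'$, and let $m\in A_1(\varphi,\alpha)$ vary. The ``target'' $\beta_{n_0}$ is now fixed while $\varphi^m(\alpha)$ varies, so the argument reduces more directly to the squarefree primitive divisor theorem of \cite{GNT2013} applied to the single algebraic number $\beta_{n_0}$, combined with the uniform lower bound $\hhat_{\varphi,K}(\alpha)\ge\tau$ to keep the exceptional $m$ controlled uniformly in $\alpha$.

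The main obstacle will be statement (A), and specifically the uniformity in $\alpha$: the witness $\eta$ depends on $\alpha$ and its height grows with that of $\alpha$, so Vojta's conjecture on $\bP^1\times\bP^1$ must be applied in such a way that the exceptional subvariety produced in Vojta's inequality yields only finitely many exceptional $n$ \emph{independently} of the chosen $\eta$. Carrying this out should require the same style of height-theoretic descent and discriminant estimates used in \cite{portrait} and \cite{GNT2013}, but now tracking the two-parameter interaction between the preperiodic tail from $\alpha$ and the cycle containing $\beta_n$, so that the constants absorb the $\alpha$-dependence into the height bound $\tau$ and the data $(K,\varphi,N_S)$.
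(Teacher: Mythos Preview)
Your three-case decomposition matches the paper exactly: Theorem~\ref{thm:main} handles the interior $\{m>C_1,\ n>C_1\}$, and the two boundary strips are treated one coordinate at a time (Propositions~\ref{prop:small m} and~\ref{prop:small n}). Your Statement~(B) is in the right spirit and close to Proposition~\ref{prop:small n}, though the paper does not invoke \cite{GNT2013} as a black box; it applies Corollaries~\ref{cor:double sequence} and~\ref{cor:double sequence ff} to $\varphi^{m-i}(\alpha)-\gamma$ for a fixed $\gamma$ with $\varphi^{i}(\gamma)=\beta_{n_0}$, where $\gamma$ is chosen (via Lemma~\ref{lem:unramified backward}) so that $\varphi^3$ is unramified over it and $x-\gamma$ is a simple factor of $\varphi^{n_0+i}(x)-\varphi^i(x)$.

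The genuine gap is in Statement~(A). You propose to look for squarefree primes dividing $\varphi^{m_0}(\alpha)-\beta_n$, where $\beta_n$ is a periodic point of exact period $n$. But $\beta_n$ typically lives in an extension of $K$ of degree of order $d^n$, and any application of Conjecture~\ref{conj:surface} or Theorem~\ref{thm:Yamanoi} over $K(\beta_n)$ produces constants that depend on that field (in the function field case, on the genus of the associated curve, which grows with $n$). Your ``curve in $\bP^1\times\bP^1$ cut out by $\varphi^j(u)=\gamma$, $\varphi^k(v)=\beta_n$, $u=v$'' is a finite set of points, not a curve, and in any event still carries the moving target $\beta_n$; there is no mechanism here for bounds uniform in $n$. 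The paper avoids this entirely: for fixed $m$ it never introduces the periodic point $\beta_n$, but instead works with $\varphi^{m+n-i}(\alpha)-\gamma$, where $\gamma\in\bar{K}$ is a \emph{fixed} element with $\varphi^i(\gamma)=\varphi^m(\alpha)$ (again via Lemma~\ref{lem:unramified backward}) and $\varphi^{m+n-i}(\alpha)\in K$ has height $\sim d^{m+n}\tau$. Everything stays in a fixed finite extension of $K$, and the counting argument is then the same shape as in the proof of Theorem~\ref{thm:phi^4}.

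Your uniformity worry in $\alpha$ is also dissolved by a trick you are missing. Theorem~\ref{thm:phi^4} already gives the conclusion (with constants depending only on $K,\varphi,N_S,\tau$) whenever $\varphi^4$ is unramified over $\varphi^{m_0}(\alpha)$. In the remaining case, $\varphi^{m_0}(\alpha)$ is a critical value of $\varphi^4$, hence belongs to a \emph{fixed finite set} $\{b_1,\dots,b_k\}\subset K$ depending only on $\varphi$; the witness $\eta$ and the preimage $\gamma$ are then drawn from a finite list independent of $\alpha$, so after replacing $K$ by $K(\gamma)$ all constants depend only on $(K,\varphi,N_S,\tau)$. No descent tracking the height of $\eta$ as a function of $\alpha$ is needed.
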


We strongly refer the readers to Section~\ref{sec:eg}
where explicit examples are given
to illustrate Theorem~\ref{thm:extra}.
We can also prove that
the sets $A_1(\varphi,\alpha)$ and $A_2(\varphi)$
are co-finite (i.e.~having a finite complement)
in $\N_0$, see Proposition~\ref{prop:cofinite A1A2}.
In fact, for the examples
in Section~\ref{sec:eg}, the set $\bfA(\varphi,\alpha)$
is ``usually'' the whole $\N_0\times\N$.   
Therefore, Theorem~\ref{thm:extra} subsumes
Theorem~\ref{thm:main}. However, since Theorem~\ref{thm:main} is needed to prove Theorem~\ref{thm:extra}
and its simple statement does not involve the definition of $\bfA(\varphi,\alpha)$, we believe presenting it as a separate theorem will benefit the readers.

The organization of this paper is as follows. 
After giving examples in Section~\ref{sec:eg}
and basic results on absolute values and heights in
Section~\ref{sec:absv and heights}, we introduce Vojta's conjecture for $\bP^1\times\bP^1$
in Section~\ref{sec:Vojta}. Assuming this conjecture,
we prove Corollary~\ref{cor:double sequence} which
is the key ingredient for the proof of our main theorems
in the number field case. In Section~\ref{sec:Yamanoi},
Corollary~\ref{cor:double sequence ff} which is the
function field counterpart of Corollary~\ref{cor:double sequence} is proved thanks to a deep theorem of
Yamanoi. The proof of Theorem~\ref{thm:main}
is given in Section~\ref{sec:proof main}
and we finish the paper with the proof of Theorem~\ref{thm:extra} in Section~\ref{sec:proof extra}.

\medskip

{\bf Acknowledgments.} We thank Paul Vojta for useful suggestions.

\section{Examples}\label{sec:eg}
Let $\varphi(x)\in K(x)$ with $d:=\deg(\varphi)\geq 2$.
We will use the simple observation that if 
$a\in\bP^1(K)$ is not contained in the orbit of 
any critical point then $\varphi$ is unramified
at $a$ as well as any point in the backward orbit of $a$.

\subsection{Number field case}
Let $K$ be a number field and $\varphi(x)=x^2+1$. We have $A_2(\varphi)=\N$ thanks to the following:
\begin{itemize}
	\item For every $n\in\N$, the polynomial $P_n(x):=\varphi^n(x)-x$
	has only simple factors. In fact, every root $r$ of $P_n$ is an algebraic integer and this implies:
	$$P_n'(r)=\varphi'(r)\varphi'(\varphi(r))\cdots \varphi'(\varphi^{n-1}(r))-1=2^nr\varphi(r)\cdots \varphi^{n-1}(r)-1\neq 0.$$
	
	\item For every $n\in\N$, every root $r$
	of $P_n(x)$ whose period is strictly smaller than $n$ must be a root of $P_{n/p}(x)$ for a prime divisor $p$ of $n$. From 
	$$\sum_{\text{prime }p\mid n} 2^{n/p}<2^n,$$
	there exists $\beta\in\bar{\Q}$ having exact period $n$.
	
	\item Since the critical point $0$ is not preperiodic, the existence of $\eta$ as in Definition~\ref{def:A1A2A}(ii) is guaranteed.
\end{itemize}

For $\alpha\in K$ that is not $\varphi$-preperiodic,  
there are three (mutually exclusive) cases:
\begin{itemize}
	\item [(1)] There is $M\in\N_0$ such that
	$\varphi^M(\alpha)=1$. Then $A_1(\varphi,\alpha)=
	\N_0\setminus\{M\}$.
	\item [(2)] There is $M\in \N_0$ such that
	$\varphi^M(\alpha)=-1$. Then $\varphi^{M+1}(\alpha)=2$, the only pre-image of $2$ that is not $-1$ is $1$, and $\varphi$ is not dynamically unramified over $1$. Hence $A_1(\varphi,\alpha)=\N_0\setminus\{M+1\}$.
	\item [(3)] $\{1,-1\}\cap\{\varphi^m(\alpha):\ m\geq 0\}=\emptyset$. Then $A_1(\varphi,\alpha)=\N_0$.
\end{itemize}

Theorem~\ref{thm:extra} shows that there is
a finite subset $\Delta$ of $\N_0\times\N$ depending
only on $\varphi$ and $K$ satisfying the following.
For every $\alpha\in K$ that is not $\varphi$-preperiodic, for every $(m,n)\in \bfA(\varphi,\alpha)\setminus \Delta$, there exists $\fp\in M_K^0$ such that
$\alpha$ has squarefree portrait $(m,n)$ modulo $\fp$.

\subsection{Function field case}
Let $K$ be a finite extension of $\C(t)$ and
 $\varphi(x)=x^2+t$. By similar arguments to the previous
 example, we have that $A_2(\varphi)=\N$. To prove that
 $P_n'(r)\neq 0$ for every root $r$ of $P_n(x)$, use
 the fact that if $\fp$ is the place of $\C(t)$ 
 corresponding to the point at infinity of $\bP^1(\C)$
 then (after extending $\ord_\fp$ to $\overline{\C(t)}$)
 we have
 $\ord_{\fp}(2^nr\varphi(r)\cdots\varphi^{n-1}(r))<0$.
 
 Since $\varphi$ is not isotrivial, by a result of
 Baker \cite{Baker2009}, there is a positive
 lower bound $\tau$ (depending only on $\varphi$ and $K$) 
 on $\hhat_{\varphi,K}(\alpha)$
 for every $\alpha\in \bP^1(K)$
 that is not $\varphi$-preperiodic.

 For $\alpha\in K$ that is not $\varphi$-preperiodic,  
there are three (mutually exclusive) cases:
\begin{itemize}
	\item [(1)] There is $M\in\N_0$ such that
	$\varphi^M(\alpha)=t$. Then $A_1(\varphi,\alpha)=
	\N_0\setminus\{M\}$.
	\item [(2)] There is $M\in \N_0$ such that
	$\varphi^M(\alpha)=-t$. Then $\varphi^{M+1}(\alpha)=t^2+t$, the only pre-image of $t^2+t$ that is not $-t$ is $t$, and $\varphi$ is not dynamically unramified over $t$. Hence $A_1(\varphi,\alpha)=
	\N_0\setminus\{M+1\}$.
	\item [(3)] $\{t,-t\}\cap\{\varphi^m(\alpha):\ m\geq 0\}=\emptyset$. Then $A_1(\varphi,\alpha)=\N_0$.
\end{itemize}	
 	
Theorem~\ref{thm:extra} shows that there is
a finite subset $\Delta$ of $\N_0\times\N$ depending
only on $\varphi$ and $K$ satisfying the following.
For every $\alpha\in K$ that is not $\varphi$-preperiodic, for every $(m,n)\in\bfA(\varphi,\alpha)\setminus \Delta$, there exists a prime of good reduction $\fp\in M_K^0$ such that
$\alpha$ has squarefree portrait $(m,n)$ modulo $\fp$.
 	
\section{Absolute values and heights}
\label{sec:absv and heights}

\subsection{Absolute values}

When $K$ is a function field, $M_K=M_K^0$.
When $K$ is a number field, the set
$M_K\setminus M_K^0$ is finite and corresponds
to the collection of real embeddings $K\rightarrow \R$
and pairs of complex conjugate embeddings
$K\rightarrow \C$.
For
every place $v\in M_K$, define:
$$
\Vert x\Vert_v=
\begin{cases}
	e^{N_\fp\ord_\fp(x)} & \mbox{if $v\in M_K^0$ and it corresponds to the prime ideal $\fp$}\\
	\vert\sigma(x)\vert & \mbox{if $v\in M_K^{\infty}$ is real and it corresponds to 
	    $\sigma:\ K\rightarrow \R$}\\
	\vert\sigma(x)\vert^2 & \mbox{if $v\in M_K^{\infty}$ is complex and it corresponds to $\sigma:\ K\rightarrow \C$}
\end{cases}
$$

We also define $\vert x\vert_v=\Vert x\Vert_v^{1/2}$ if $v$ is complex, and 
$\vert x\vert_v=\Vert x\Vert_v$ otherwise. This way, $\vert\cdot\vert_v$ becomes
an absolute value on $K$ (note that $\Vert\cdot\Vert_v$
does not satisfy the triangle inequality when $v$ is complex).

\subsection{Heights}\label{subsec:heights}
For every real number $y$, define $\log^+(y)=\log\max\{1,y\}$.
We define the Weil height $h_K$ on $\bP^1(\bar{K})$ as follows:
$$h_K(x)=\frac{1}{[K(x):K]}\sum_{\fq\in M_{K(x)}} \log^+\Vert x\Vert_\fq,$$
for every $x\in\bar{K}$, and $h_K(\infty)=0$.
If $L/K$ is a finite extension then $h_L=[L:K]h_K$.
When $K$ is a number field, Northcott's principle states that there are only finitely many elements of $\bP^1(\bar{K})$
whose height and degree are bounded above by a given constant. For the more general Weil height associated to a Cartier divisor on a projective variety, we refer
the readers to \cite{bg06}.

Let $x$ and $y$ be distinct elements of $\bP^1(K)$.
Let $\cE:=\{\fp\in M_K^0:\ r_\fp(x)=r_\fp(y)\}$, 
we have the following
inequality:
\begin{equation}\label{eq:new Liouville}
\sum_{\fp\in \cE}N_{\fp}\leq \min\{h_K(x),h_K(y)\}+h_K(x)+h_K(y)+c_K,
\end{equation}
where $c_K=0$ in the function field case and 
$c_K=[K:\Q]\log 2$ in the number field case.
To prove this, we assume that $x,y\in K$ since the case $x=\infty$
or $y=\infty$ is easy. Without loss of generality, assume
$h_K(x)\leq h_K(y)$. For every
$\fp\in\cE$, either 
$\vert x\vert_{\fp}>1$
or $\vert x-y\vert_{\fp}<1$. Therefore 
$\sum_{\fp\in\cE}N_{\fp}\leq h_K(x)+h_K(x-y).$
Then \eqref{eq:new Liouville} follows
from the well-known inequality
$h_K(x-y)\leq h_K(x)+h_K(y)+c_K$
\cite[Proposition~1.5.15]{bg06}.

If $\varphi(z)\in\Kbar(z)$ is a rational function of degree  $d\ge 2$, then for each point $x\in\bP^1(\Kbar)$, following \cite{Call-Silverman} we define the canonical height (over $K$) of $x$ under the action of $\varphi$ by:
$$\hhat_{\varphi,K}(x)=\lim_{n\to\infty}\frac{h_K(\varphi^n(x))}{d^n}.$$

According to \cite{Call-Silverman}, there is a constant $C_{\varphi,K}$ depending
only on $K$ and $\varphi$ such that $| h_K(x) - \hhat_\varphi(x)| <
C_{\varphi,K}$ for all $x \in \bP^1(\Kbar)$.  
When $K$ is a number field, Northcott's principle implies
that $\hhat_{\varphi,K}(x)=0$ if and only if $x$ is $\varphi$-preperiodic. Moreover, there is a positive
lower bound depending only on 
$K$ and $\varphi$ for $\hhat_{\varphi,K}(x)$
for every $x\in\bP^1(K)$ that is not $\varphi$-preperiodic. Baker \cite{Baker2009} proves that an analogous result holds in the 
function field case if $\varphi$ is not isotrivial.

\section{A conjecture of Vojta and its consequences}
\label{sec:Vojta}

Throughout this section, let $K$ be a number field. An $M_K$-constant is a collection of real numbers
$(c_v)_{v\in M_K}$ such that $c_v=0$ for all but finitely many $v$.
Let $X$ be a smooth projective variety and $D$ a (Weil or Cartier) divisor
defined over $K$. For each $v\in M_K$, we can define Weil functions:
$$\lambda_{X,D,v}: (X\setminus \supp D)(\C_v)\rightarrow \R$$
satisfying certain functorial properties (see 
\cite[Chapter~8]{Vojta2011} for more details). 
If $D=\displaystyle \sum_{i=1}^k m_i[a]$ is a divisor on $\bP^1_K$ where $k\in\N$ and $m_i\in \Z$ for every $i$, we \emph{always} use the definition:
    $$\lambda_{\bP^1_K,D,\fp}(x)=\displaystyle\sum_{i=1}^k m_i\log^+\frac{1}{\Vert x-a_i\Vert_{\fp}}$$
for $x\notin\{a_1,\ldots,a_k\}$. Note
that when $a_i=\infty$, the formula 
$\displaystyle\log^+\frac{1}{\Vert x-a_i\Vert_{\fp}}$ is interpreted as $\log^+\Vert x \Vert_v$.
 
Define the truncated counting function (see \cite[Chapter~22]{Vojta2011}):
$$N^{(1)}(X,D,x):=\sum_{\fp\in M_K^0}\min\{\lambda_{X,D,\fp}(x),N_\fp\}$$
for $x\in X(K)$ not lying in the support of $D$. 

From now on, we work over the ambient variety $X=\bP^1\times\bP^1$. The following is a special case of Vojta's conjecture (see
\cite[Conjecture~22.5(b)]{Vojta2011}):
\begin{conjecture}\label{conj:surface}
Let $K$ be a number field, let $D$ be a normal crossing divisor on $X=(\bP^1_K)^2$, let $\cK=\cO(-2,-2)$ be the canonical sheaf on $X$. Then for any $\epsilon>0$
there is a proper Zariski closed subset $Z$ of $X$, depending 
on $K$, $D$, and $\epsilon$, such that for all $C\in \R$, the inequality
$$N^{(1)}(X,D,(a,b))\geq h_{\cK(D),K}(a,b)-\epsilon(h(a)+h(b))-C$$
holds for all but finitely many $(a,b)\in (X\setminus Z)(K)$.
\end{conjecture}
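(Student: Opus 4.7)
The final statement, Conjecture~\ref{conj:surface}, is Vojta's conjecture specialized to $X = \bP^1 \times \bP^1$ with a normal-crossing divisor; it is cited here as a working hypothesis rather than a claim that the authors will prove. Since no proof is known, what I can honestly offer is a sketch of the standard attack on Vojta's conjecture for surfaces, together with an account of where it currently stalls.

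The canonical approach is Vojta's own method, as refined by Bombieri and Faltings. Assume for contradiction that there is an infinite sequence $(a_i, b_i) \in (X \setminus Z)(K)$ violating the proposed inequality. Choose a large integer $r$, weights $d_1, d_2 \gg 0$, and a small index parameter $\sigma$, and apply Siegel's lemma on $H^0(X^r, \cO(d_1, d_2)^{\boxtimes r})$ to construct an auxiliary global section $s$ of controlled arithmetic height that vanishes to high index along the subscheme cut out by a generic subcollection of the $(a_i, b_i)$ and along $D$. A product-theorem / Dyson-type zero estimate then forces $s$ to vanish on a positive-dimensional subvariety; comparing this with the normal-crossing hypothesis on $D$ and invoking adjunction against the canonical sheaf $\cK = \cO(-2,-2)$ converts the algebraic output into the desired inequality in terms of $h_{\cK(D), K}(a,b)$. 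The role of $Z$ is to absorb the exceptional components produced by the product theorem, and the shape of the bound --- linear in $h(a) + h(b)$ with coefficient $\epsilon$ --- reflects the index/error trade-off in Siegel's lemma.

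The serious obstruction, and the reason Conjecture~\ref{conj:surface} remains open, is the \emph{truncation} built into $N^{(1)}$: the contribution at each place $\fp$ is capped at $N_\fp$, so only the squarefree part of the $D$-intersection is counted. The Vojta/Bombieri/Faltings machinery outlined above naturally produces estimates for the \emph{untruncated} counting function, i.e.~for full intersection multiplicities, and no device for passing from there to $N^{(1)}$ is currently known in dimension $\geq 2$. In dimension one this passage is precisely the $abc$ conjecture, itself open; in the analytic function-field setting the analogous truncated second-main-theorem is the deep theorem of Yamanoi, which is exactly what the authors will exploit in Section~\ref{sec:Yamanoi}. Any plausible attack on Conjecture~\ref{conj:surface} would therefore have to introduce a genuinely new arithmetic multiplicity estimate for $D$ on $\bP^1 \times \bP^1$, or a workable arithmetic analogue of Yamanoi's truncation argument via Vojta's dictionary. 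Partial unconditional results via Schmidt's subspace theorem in the Corvaja--Zannier formulation exist for certain split divisors, but none of them approaches the general normal-crossing case. I expect truncation to be essentially the whole obstacle; the remainder of the framework is comparatively well understood.
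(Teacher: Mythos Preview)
Your assessment is correct: Conjecture~\ref{conj:surface} is stated in the paper as an open conjecture and is used only as a hypothesis in the number field case, so there is no proof in the paper to compare against. Your discussion of the standard Vojta--Bombieri--Faltings approach and the truncation obstruction is accurate background, though strictly speaking no ``proof proposal'' is called for here since the paper makes no claim to prove this statement.
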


\begin{remark}
Note that if $D\cong\cO(q,r)$ then $\cK(D)\cong\cO(q-2,r-2)$ and we can choose the definition:
$$h_{\cK(D),K}(a,b)=(q-2)h_K(a)+(r-2)h_K(b)$$
for $(a,b)\in (\bP^1)^2(K)$.
\end{remark}

\begin{lemma}\label{lem:S(K,f)}
Let $K$ be a number field, let $f(x)\in K(x)\setminus K$
be a non-constant rational function. Let $\cZ$
and $\cP$ respectively denote the effective divisors of zeros and poles of $f$ (hence $(f)=\cZ-\cP$). There is a finite
set of places $S(K,f)\subseteq M_K$ depending only on $K$ and
$f$ such that the following holds. For every prime
$\fp\in M_K^0\setminus S(K,f)$, for every $a\in P^1(K)$
that is not a zero or pole of $f$, we have:
\begin{itemize}
\item [(a)] If 
$\lambda_{\bP^1_K,\cP,\fp}(a)>0$
then $\log \Vert f(a)\Vert_{\fp} = \lambda_{\bP^1_K,\cP,\fp}(a)$,
\item [(b)] if $\lambda_{\bP^1_K,\cP,\fp}(a)=0$
then $\log \Vert f(a)\Vert_{\fp} = -\lambda_{\bP^1_K,\cZ,\fp}(a)$.
\end{itemize}
\end{lemma}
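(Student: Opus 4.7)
The plan is to prove the lemma by direct computation from an explicit factorization of $f$. Pass to a finite extension $L/K$ containing every zero and pole of $f$, and fix
$$f(x)=c\prod_{i=1}^{r}(x-\alpha_i)^{e_i}\prod_{j=1}^{s}(x-\beta_j)^{-d_j}$$
over $L$, with a possible zero or pole at $\infty$ of respective orders $e_\infty,d_\infty\geq 0$ satisfying $\sum e_i-\sum d_j=d_\infty-e_\infty$. Define $S(K,f)$ to consist of the places of $K$ lying below an archimedean place of $L$ or a nonarchimedean place $\tilde\fp$ of $L$ at which one of the following fails: $c$ is a unit, every $\alpha_i$ and every $\beta_j$ is integral, and the elements of $\{\alpha_1,\ldots,\alpha_r,\beta_1,\ldots,\beta_s\}$ are pairwise incongruent modulo $\tilde\fp$. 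Clearly $S(K,f)$ is finite.

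For $\fp\notin S(K,f)$ fix an extension $\tilde\fp\mid\fp$; for $a\in\bP^1(K)$ not a zero or pole of $f$, the goodness of $\tilde\fp$ forces at most one of the quantities $\Vert a-\alpha_i\Vert_{\tilde\fp},\Vert a-\beta_j\Vert_{\tilde\fp}$ to be strictly less than $1$, so the analysis splits into three mutually exclusive cases: $a$ is $\tilde\fp$-close to a single pole $\beta_j$, to a single zero $\alpha_i$, or to none of them (including the case $\Vert a\Vert_\fp>1$, i.e.\ $a$ is close to $\infty$). In each case one reads $\log\Vert f(a)\Vert_\fp$ off of the factorization and compares it with $\lambda_{\bP^1_K,\cP,\fp}(a)$ and $\lambda_{\bP^1_K,\cZ,\fp}(a)$ from the definitions recalled in Section~\ref{sec:Vojta}. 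For instance, when $a\equiv\beta_j\pmod{\tilde\fp}$, every other factor in the product is a $\tilde\fp$-unit and $\log\Vert f(a)\Vert_\fp=-d_j\log\Vert a-\beta_j\Vert_{\tilde\fp}=\lambda_{\bP^1_K,\cP,\fp}(a)$, establishing (a); the case $a\equiv\alpha_i\pmod{\tilde\fp}$ is symmetric and yields (b), and in the ``far'' case where $\Vert a\Vert_\fp\leq 1$ and $a$ is close to none of the $\alpha_i,\beta_j$ one obtains $\log\Vert f(a)\Vert_\fp=0=\lambda_{\bP^1_K,\cP,\fp}(a)=\lambda_{\bP^1_K,\cZ,\fp}(a)$, falling under (b).

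The main bookkeeping, and the only real obstacle, concerns the behavior at $\infty$: when $\Vert a\Vert_\fp>1$ each factor $\log\Vert a-\alpha_i\Vert_{\tilde\fp}$ or $\log\Vert a-\beta_j\Vert_{\tilde\fp}$ collapses to $\log\Vert a\Vert_{\tilde\fp}$ (because the $\alpha_i,\beta_j$ are $\tilde\fp$-integral), so the entire sum telescopes to $(d_\infty-e_\infty)\log\Vert a\Vert_\fp$; this matches $\lambda_{\bP^1_K,\cP,\fp}(a)=d_\infty\log^+\Vert a\Vert_\fp$ when $d_\infty>0$ (giving (a)) and matches $-\lambda_{\bP^1_K,\cZ,\fp}(a)=-e_\infty\log^+\Vert a\Vert_\fp$ when $d_\infty=0$ (giving (b)). The normalization comparison between $\Vert\cdot\Vert_\fp$ and $\Vert\cdot\Vert_{\tilde\fp}$ is routine since $a$ and $f(a)$ both lie in $K$, so an appropriate sum over $\tilde\fp\mid\fp$ (together with the standard relation between $\ord_\fp$ and $\ord_{\tilde\fp}$) reproduces the desired identity. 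Apart from this, the argument is the standard principle that, for good primes, the valuation of a rational function is read off directly from its divisor.
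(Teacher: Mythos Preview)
Your argument is correct, but it takes a different and more hands-on route than the paper. You pass to a splitting field $L$, write down the explicit factorization of $f$, and then do a case analysis on whether $a$ is $\tilde\fp$-close to a particular zero, pole, or $\infty$, reading off $\ord_{\tilde\fp}(f(a))$ term by term. The paper instead stays over $K$ and argues functorially with Weil functions: from $(f)=\cZ-\cP$ one gets, outside a finite set $S_1(K,f)$, the exact identity $\lambda_{\bP^1_K,\cZ,\fp}(a)=-\log\Vert f(a)\Vert_\fp+\lambda_{\bP^1_K,\cP,\fp}(a)$, together with the observations that $\lambda_{\bP^1_K,\cZ,\fp}(a)>0$ forces $a$ to reduce to a zero and $\lambda_{\bP^1_K,\cP,\fp}(a)>0$ forces $a$ to reduce to a pole. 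Enlarging $S_1$ so that no zero and pole collide modulo $\fp$ then gives (a) and (b) immediately, since both local heights are nonnegative and at most one can be positive.

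What each approach buys: yours is self-contained and makes the mechanism completely explicit, at the cost of the bookkeeping you flag at the end (the comparison between $\Vert\cdot\Vert_\fp$ and $\Vert\cdot\Vert_{\tilde\fp}$, and the sum over $\tilde\fp\mid\fp$). That bookkeeping is routine but not entirely trivial, since the Weil function $\lambda_{\bP^1_K,\cP,\fp}$ is defined at the level of $K$ while your computation lives in $L$; the cleanest way to reconcile them is precisely the Galois-invariance/base-change compatibility of Weil functions, which is what the paper's functorial argument is implicitly invoking. The paper's route is shorter and sidesteps the extension entirely, but presupposes comfort with the Weil-function formalism.
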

\begin{proof}
There is a finite set of places $S_1(K,f)$ such that for every 
$\fp\in M_K^0\setminus S(K,f)$ and for every $a\in \bP^1(K)$
that is not a zero or pole of $f$, the following hold:
\begin{itemize}
	\item [(i)] $\lambda_{\bP^1_K,\cZ,\fp}(a)=\lambda_{\bP^1_K,(f),\fp}(a)+\lambda_{\bP^1_K,\cP,\fp}(a)=-\log\Vert f(a)\Vert_{\fp}+\lambda_{\bP^1_K,\cP,\fp}(a)$.
	\item [(ii)] If $\lambda_{\bP^1_K,\cZ,\fp}(a)>0$
	then $a$ and some zero of $f$ have the same reduction modulo $\fp$.
	\item [(iii)] If $\lambda_{\bP^1_K,\cP,\fp}(a)>0$
	then $a$ and some pole of $f$ have the same reduction modulo $\fp$. 
\end{itemize}
Extend $S_1(K,f)$ to $S(K,f)$ such that for every $\fp\in M_K^0\setminus S(K,f)$, any pole of $f$ and any zero of $f$ have
different reduction modulo $\fp$.
\end{proof}

Let $f(x)\in K(x)$, $a\in \bP^1(K)$
 that is not a pole of $f$, and $b\in K$ such that $f(a)\neq b$, let $\cZ(f,a,b):=\cZ_K(f,a,b)$ be the set of primes $\fp$ of $K$
 such that $\ord_{\fp}(f(a)-b)\geq 1$ or, equivalently, 
 $\Vert f(a)-b\Vert_{\fp}<1$. We have:
\begin{prop}\label{prop:uniform}
Assume Conjecture~\ref{conj:surface} holds. Let $K$ be a number field, let $f(x)\in K(x)$ be 
a rational function of degree $d>0$.
For every $\epsilon>0$,
there exist a proper Zariski closed subset $Z$ of $X:=(\bP^1_K)^2$ and 
a constant $C_1:=C_1(K,f,\epsilon)$ such that
\begin{equation}\label{eq:prop uniform}
\sum_{\fp\in \cZ(f,a,b)} N_{\fp}
\geq (d-2-\epsilon)h_K(a)-(2+\epsilon)h_K(b)-C_1
\end{equation}
for every $(a,b)\in (X\setminus Z)(K)$ such that
$a$ is not a pole of $f$ and $b\in K$ satisfying $f(a)\neq b$.
\end{prop}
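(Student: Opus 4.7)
The plan is to apply Conjecture~\ref{conj:surface} to the graph $\Gamma_f \subset X := \bP^1 \times \bP^1$ of $f$. An elementary intersection computation shows that $\Gamma_f$ is a smooth irreducible curve of bidegree $(d,1)$, so it is trivially a reduced normal crossings divisor on $X$, and $\cK(\Gamma_f) \cong \cO(d-2, -1)$, with associated Weil height
$$h_{\cK(\Gamma_f),K}(a,b) = (d-2)h_K(a) - h_K(b).$$
Conjecture~\ref{conj:surface} therefore produces, for any $\varepsilon' > 0$, a proper Zariski closed $Z \subset X$ and a constant $C$ such that
$$N^{(1)}(X,\Gamma_f,(a,b)) \ge (d-2)h_K(a) - h_K(b) - \varepsilon'(h_K(a)+h_K(b)) - C$$
for all $(a,b) \in (X\setminus Z)(K)$ outside a finite exceptional subset (which we may absorb into $Z$ by enlarging it by finitely many $K$-points).

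The heart of the argument is to establish the matching upper bound
$$N^{(1)}(X,\Gamma_f,(a,b)) \le \sum_{\fp \in \cZ(f,a,b)} N_\fp + h_K(b) + O_{K,f}(1).$$
To prove it, fix a Weil function $\lambda_{X,\Gamma_f,\fp}$: an argument analogous to Lemma~\ref{lem:S(K,f)}, applied to the local defining equation $y - f(x)$ of $\Gamma_f$, shows that outside a finite ``bad'' set $S(K,f)$ of primes, and at points $(a,b)$ where neither $a$ is $\fp$-adically close to $\infty$ or to a pole of $f$, nor $b$ is $\fp$-adically close to $\infty$, one has
$$\lambda_{X,\Gamma_f,\fp}(a,b) = \log^+ \frac{1}{\Vert f(a)-b\Vert_\fp}.$$
Hence $\min\{\lambda_{X,\Gamma_f,\fp}(a,b), N_\fp\}$ is nonzero only when $\fp \in \cZ(f,a,b)$, contributing at most $N_\fp$. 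The remaining ``exceptional'' primes fall into two families: (A) those with $a \equiv \infty \pmod{\fp}$, in which case $\lambda_{X,\Gamma_f,\fp}(a,b)$ is positive only if $b$ is simultaneously close to $f(\infty)$, which (generically) forces $\fp \in \cZ(f,a,b)$ already; and (B) those with $b \equiv \infty \pmod{\fp}$ together with $a$ close to a pole of $f$, whose total $N_\fp$-mass is bounded by $\sum_{\fp\,:\,\ord_\fp(b)<0} N_\fp \le h_K(b) + O(1)$.

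Combining the Vojta lower bound with this upper bound, and moving $h_K(b)$ across, yields
$$\sum_{\fp \in \cZ(f,a,b)} N_\fp \ge (d-2)h_K(a) - 2h_K(b) - \varepsilon'(h_K(a)+h_K(b)) - C';$$
renaming $\varepsilon' = \varepsilon$ and bundling all constants (together with the finitely many Vojta exceptions and the primes in $S(K,f)$) into $C_1$ and $Z$ produces the stated inequality.

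The main obstacle I anticipate is the careful verification of the identity $\lambda_{X,\Gamma_f,\fp}(a,b) = \log^+(1/\Vert f(a)-b\Vert_\fp)$ away from $S(K,f)$, and of the bookkeeping that controls the exceptional primes of family (B) by $h_K(b)$ rather than by $h_K(a)$: this is exactly what forces the coefficient $-(2+\varepsilon)$ on $h_K(b)$ in the final inequality, rather than the naive $-(1+\varepsilon)$ arising solely from $\cK(\Gamma_f)$. Once these local computations at the boundary divisors $\{\infty\} \times \bP^1$ and $\bP^1 \times \{\infty\}$ are in place, the rest of the proof is a routine combination of Vojta's conjecture and standard height machinery.
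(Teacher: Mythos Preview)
Your approach is essentially the same as the paper's: both apply Conjecture~\ref{conj:surface} to the graph divisor $D=\Gamma_f$ (of bidegree $(d,1)$), then show $N^{(1)}(X,D,(a,b))\le \sum_{\fp\in\cZ(f,a,b)}N_\fp + h_K(b)+O_{K,f}(1)$ by a local analysis, so that the extra $h_K(b)$ turns Vojta's $-(1+\epsilon)h_K(b)$ into $-(2+\epsilon)h_K(b)$. The only point to sharpen is your exceptional case split: the relevant dichotomy is not ``$a\equiv\infty$'' versus ``$b\equiv\infty$ and $a$ near a pole'', but rather the trichotomy the paper uses---(i) $\|b\|_\fp>1$ or $\fp\in S(K,f)$ (mass $\le h_K(b)+O(1)$), (ii) $\|b\|_\fp\le 1$ and $a$ is $\fp$-close to \emph{any} pole of $f$ (here $\lambda_{X,D,\fp}(a,b)$ vanishes \emph{exactly}, via Lemma~\ref{lem:S(K,f)}), and (iii) the remaining primes, where $\min\{\lambda_{X,D,\fp},N_\fp\}$ is nonzero only for $\fp\in\cZ(f,a,b)$.
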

\begin{proof}

Let $D$ be the effective divisor defined by the equation $y=f(x)$
in $X$. Let $v$ be a place of $K$, we
now define $\lambda_{X,D,v}$. 
Let $\cP$ denote the effective divisor of $\bP^1_K$
corresponding to the poles (counted with multiplicity) of $f$,
then write $D_{\cP}:=\cP\times\bP^1_K$ to denote the pull-back to $X$. Let $D_{\infty}$ be the divisor
$\bP^1_K\times \{\infty\}$. Write $R=(f(x)-y)$ to denote the 
principal divisor generated by the rational function $f(x)-y$.
From 
$R=D-D_{\cP}-D_{\infty}$
we can take
$$\lambda_{X,D,v}:=\lambda_{X,R,v}+\lambda_{X,D_{\cP},v}+\lambda_{X,D_{\infty},v}.$$
We define
$\lambda_{X,R,v}$, $\lambda_{X,D_{\cP},v}$, and $\lambda_{X,D_{\infty},v}$
as follows. For every $(a,b)\in X(K)$ such that $a$ is not a pole of $f$, 
$b\neq \infty$, and $f(a)\neq b$, we have:
$$\lambda_{X,R,v}(a,b)=-\log \Vert f(a)-b\Vert_v,$$
$$\lambda_{X,D_{\cP},v}(a,b)=\lambda_{\bP^1_K,\cP,v}(a)$$
$$\lambda_{X,D_{\infty},v}(a,b)=\lambda_{\bP^1_K,[\infty],v}(b)=\log^+\Vert b\Vert_v.$$
Therefore, we can define:
\begin{equation}\label{eq:combine 3 lambda}
\lambda_{X,D,v}(a,b)=-\log \Vert f(a)-b\Vert_v+\lambda_{\bP^1_K,\cP,v}(a)+\log^+\Vert b\Vert_v.
\end{equation}

By Conjecture~\ref{conj:surface}, given $\epsilon>0$, there exist a proper Zariski closed subset
$Z$ of $X$ and a constant $C_2$ depending on $K$, $f$, and $\epsilon$
such that: 
\begin{equation}\label{eq:apply Vojta}
N^{(1)}(X,D,(a,b))\geq (d-2)h_K(a)-h_K(b)-\epsilon(h_K(a)+h_K(b)) 
-C_2
\end{equation}
for $(a,b)\in (X\setminus Z)(K)$. If $a$ is not a pole
of $f$, $b\neq \infty$, and $f(a)\neq b$, from \eqref{eq:combine 3 lambda} we have:
\begin{align}\label{eq:def of N1}
N^{(1)}(X,D,(a,b))=&\sum_{\fp\in M_K^0}\min\{-\log\Vert f(a)-b\Vert_\fp+\lambda_{\bP^1_K,\cP,\fp}(a)+\log^+\Vert b\Vert_{\fp},N_{\fp}\}.
\end{align}

By choosing $C_1>\vert d-2-\epsilon\vert \cdot \max\{h_K(z):\ f(z)=0\}$, inequality \eqref{eq:prop uniform}
holds when $a$ is zero of $f$. 
From now on, we assume the extra condition that $a$ is not a zero of $f$. Let $S(K,f)$ be the set of places of $K$ as in the conclusion
of Lemma~\ref{lem:S(K,f)}. We partition $M_K^0$
into three sets $M_K^0(1)$, $M_K^0(2)$, and $M_K^0(3)$
(depending on $K$, $f$, $a$, and $b$)
as follows.

Firstly, let $M_K^0(1):=\{\fp\in M_K^0:\ 
\log^+\Vert b\Vert_p>0\ 
\text{or}\ \fp\in S(K,f)\}$. We then have:
\begin{align}\label{eq:firstly}
\begin{split}	
	\sum_{\fp\in M_K^0(1)}\min\{-\log\Vert f(a)-b\Vert_{\fp}+\lambda_{\bP^1_K,\cP,\fp}(a)
+\log^+\Vert b\Vert_{\fp}, N_{\fp}\}&\leq \sum_{\fp\in M_K^0(1)}N_{\fp}\\
&\leq h_K(b)+C_3
\end{split}
\end{align}
where $C_3:=\displaystyle\sum_{\fp\in S(K,f)}N_{\fp}$.

Secondly, let $M_K^0(2):=
\{\fp\in M_K^0:\ \log^+\Vert b\Vert_{\fp}=0,\ 
\fp\notin S(K,f),\  \text{and}\ \lambda_{\bP^1_K,\cP,\fp}(a)>0\}$. For every $\fp\in M_K^0(2)$, Lemma~\ref{lem:S(K,f)}
gives
$\log\Vert f(a)\Vert_{\fp}=\lambda_{\bP^1_K,\cP,\fp}(a)>0$. Since
$\Vert b\Vert_{\fp}\leq 1$, we have $\log\Vert f(a)-b\Vert_{\fp}=\log\Vert f(a)\Vert_{\fp}=\lambda_{\bP^1_K,\cP,\fp}(a)$. Therefore: 

\begin{equation}\label{eq:secondly}
\sum_{\fp\in M_K^0(2)}\min\{-\log\Vert f(a)-b\Vert_{\fp}+\lambda_{\bP^1_K,\cP,\fp}(a)
+\log^+\Vert b\Vert_{\fp}, N_{\fp}\}=0.
\end{equation}

Thirdly, let $\cM_K^0(3):=\{\fp\in M_K^0:\ \log^+\Vert b\Vert_{\fp}=0,\ 
\fp\notin S(K,f),\ \text{and}\ \lambda_{\bP^1_K,\cP,\fp}(a)=0\}$.
For every $\fp\in\cM_K^0(3)$, we have:
$$\min\{-\log\Vert f(a)-b\Vert_{\fp}+\lambda_{\bP^1_K,\cP,\fp}(a)
+\log^+\Vert b\Vert_{\fp}, N_{\fp}\}=\min\{-\log\Vert f(a)-b\Vert_{\fp},N_{\fp}\}.$$
Moreover, let $\cZ$ denote the divisor of zeros of $f$, Lemma~\ref{lem:S(K,f)} gives
$0\leq \lambda_{\bP^1,\cZ,\fp}(a)=-\log\Vert f(a)\Vert_{\fp}$,
hence $\Vert f(a)\Vert_{\fp}\leq 1$. Since
$\Vert b\Vert_{\fp}\leq 1$, we have $\Vert f(a)-b\Vert_{\fp}\leq 1$. Therefore:
\begin{align}\label{eq:thirdly}
\begin{split}
	&\sum_{\fp\in M_K^0(3)}\min\{-\log\Vert f(a)-b\Vert_p+\lambda_{\bP^1_K,\cP,\fp}(a)
+\log^+\Vert b\Vert_{\fp},N_{\fp}\}\\ 
  &\leq \sum_{\fp\in M_K^0(3)}\min\{\ord_{\fp}(f(a)-b),1\}N_{\fp}\leq \sum_{\fp\in \cZ(f,a,b)} N_{\fp}. 
\end{split}
\end{align}

Combining \eqref{eq:apply Vojta}, \eqref{eq:def of N1}, \eqref{eq:firstly}, \eqref{eq:secondly},
and \eqref{eq:thirdly}, we have:
\begin{equation}
\sum_{\fp\in \cZ(f,a,b)}N_{\fp} + h_K(b)+C_3
\geq (d-2-\epsilon)h_K(a)-(1+\epsilon)h_K(b)-C_2
\end{equation}
for $(a,b)\in (X\setminus Z)(K)$ such that
$a$ is not a zero or pole of $f$, $b\neq \infty$, and $f(a)\neq b$. This finishes the proof.
\end{proof}

Proposition~\ref{prop:uniform} cannot be applied if $\bP^1\times\{b\}$
is a (horizontal) component of the exceptional variety 
$Z$. We have the following complement result:
\begin{prop}\label{prop:Roth-abc}
Assume Conjecture~\ref{conj:surface} holds. Let $K$, $f$,
and $\epsilon$ be as in Proposition~\ref{prop:uniform}.
Let $b\in K$ and let $d_b$ be the number of zeroes
of $f(x)-b$ counted without multiplicities(i.e.~the cardinality of $f^{-1}(b)$). Then there is a 
constant $C_4:=C_4(K,f,\epsilon,b)$
such that
\begin{equation}\label{eq:prop Roth-abc}
\sum_{\fp\in \cZ(f,a,b)} N_{\fp}
\geq (d_b-2-\epsilon)h_K(a)-C_4
\end{equation}
for every $a\in \bP^1(K)$ with $f(a)\notin\{\infty,b\}$.
\end{prop}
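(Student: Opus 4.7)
The plan is to apply Conjecture~\ref{conj:surface} to a divisor on $X=\bP^1\times\bP^1$ tailored so that $\cK(D)$ becomes horizontally trivial; this allows evaluation at a fixed second coordinate $c_0$ without paying an $h_K(c_0)$ cost and sidesteps the obstruction flagged immediately before the statement. Write $f^{-1}(b)=\{r_1,\ldots,r_{d_b}\}\subset\bar K$. After replacing $K$ by a finite Galois extension $L$ that splits this fiber --- which only alters the final constant via $h_L=[L:K]h_K$ together with the standard relation between $N_\fq$ and $N_\fp$ for $\fq\mid\fp$ unramified --- one may assume $r_i\in K$. Consider the normal crossings divisor
$$D:=\sum_{i=1}^{d_b}\bigl(\{r_i\}\times\bP^1\bigr)+\bigl(\bP^1\times\{0\}\bigr)+\bigl(\bP^1\times\{\infty\}\bigr)\in\cO(d_b,2),$$
so that $\cK(D)\cong\cO(d_b-2,0)$ and $h_{\cK(D),K}(a,c)=(d_b-2)h_K(a)$, with no $h_K(c)$ contribution.

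Conjecture~\ref{conj:surface} then produces a proper Zariski closed $Z\subseteq X$ and a constant $C$ such that
$$N^{(1)}(X,D,(a,c))\geq (d_b-2-\epsilon)h_K(a)-\epsilon h_K(c)-C$$
for all but finitely many $(a,c)\in(X\setminus Z)(K)$. Since $Z$ has only finitely many horizontal divisor components $\bP^1\times\{y_j\}$ and $K$ is infinite, one can fix once and for all a constant $c_0\in K\setminus\{0,\infty\}$ with $\bP^1\times\{c_0\}$ avoiding every such $y_j$; then for all but finitely many $a\in\bP^1(K)$, the point $(a,c_0)$ lies outside $Z$ and outside the additional finite exceptional set of the conjecture, so the inequality above applies.

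To finish, expand $N^{(1)}(X,D,(a,c_0))$ via the additivity of Weil functions
$$\lambda_{X,D,\fp}(a,c_0)=\sum_{i=1}^{d_b}\lambda_{\{r_i\},\fp}(a)+\lambda_{\{0\},\fp}(c_0)+\lambda_{\{\infty\},\fp}(c_0),$$
and carry out a case analysis as in the proof of Proposition~\ref{prop:uniform} (invoking Lemma~\ref{lem:S(K,f)}). This produces a finite set of primes $S_0$, depending only on $c_0$, the $r_i$, and the bad reduction of $f$, such that for $\fp\notin S_0$ the $\fp$-th summand of $N^{(1)}$ equals $N_\fp$ precisely when $\fp\in\cZ(f,a,b)$ and vanishes otherwise. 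Hence $N^{(1)}(X,D,(a,c_0))\leq\sum_{\fp\in\cZ(f,a,b)}N_\fp+\sum_{\fp\in S_0}N_\fp$, and combining with the Vojta lower bound yields \eqref{eq:prop Roth-abc} after absorbing $\epsilon h_K(c_0)+C+\sum_{\fp\in S_0}N_\fp$ into $C_4$; the finitely many exceptional $a$'s are swallowed by enlarging $C_4$ once more. The decisive design choice is the inclusion of the two horizontal lines in $D$: they make $\cK(D)$ horizontally trivial, which in turn permits the use of a single auxiliary $c_0$ and decouples the argument from the possibly exceptional horizontal line $\bP^1\times\{b\}$ that obstructs Proposition~\ref{prop:uniform}.
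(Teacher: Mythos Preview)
Your proof is correct and follows essentially the same strategy as the paper: apply Conjecture~\ref{conj:surface} to a divisor supported on the vertical lines $f^{-1}(b)\times\bP^1$, then fix the second coordinate at some $c_0$ (the paper writes $\beta_0$) chosen to avoid the horizontal components of the exceptional set $Z$, so that only finitely many $a$ land in $Z$. The two embellishments you add are harmless but unnecessary: the paper works directly with the reduced $K$-divisor $\delta$ of zeroes of $f(x)-b$ (no field extension needed), and since $c_0$ is fixed once and for all, any term $(2+\epsilon)h_K(c_0)$ is already a constant---so adding the horizontal lines $\bP^1\times\{0,\infty\}$ to make $\cK(D)$ horizontally trivial is not the ``decisive design choice'' you claim, merely a cosmetic one.
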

\begin{proof}
Let $\delta$ be the divisor of zeroes counted without multiplicities of $f(x)-b$
in $\bP^1_K$. In other words, over $\bar{K}$, the divisor
$\delta$ is reduced and consists of the $d_b$ points
in the set $f^{-1}(b)$. Let $D=\delta\times\bP^1_K$
be a divisor of $X:=(\bP^1_K)^2$, we have
$D\cong \cO(d_b,0)$.

There is a finite set of places $S'\subset M_K^0$
depending only on $K$ and $f$ such that 
for every $\fp\in M_K^0\setminus S'$ and
for every $a\in \bP^1(K)$ with $f(a)\notin\{\infty,b\}$,
we have
$\vert f(a)-b\vert_\fp<1$ if and only if 
$\vert a-z_0\vert_\fp<1$ for some $z_0\in f^{-1}(b)$
(as always, if $z_0=\infty$ we interpret 
$\vert a-z_0\vert_\fp$
as $\vert 1/a\vert_\fp$). Hence, for every such $a$
and for every $\beta\in\bP^1(K)$, we have:
\begin{equation}\label{eq:Roth-abc 1}
\sum_{\fp\in \cZ(f,a,b)} N_{\fp}+\sum_{\fp\in S'}N_\fp\geq N^{(1)}(\bP^1_K,\delta,a)=N^{(1)}(X,D,(a,\beta)).  
\end{equation}

By Conjecture~\ref{conj:surface}, there
is a proper Zariski closed subset $Z$ of $X$
depending on $K$, $f$, $\epsilon$, and $b$
such that the inequality
\begin{equation}\label{eq:Roth-abc use Vojta}
N^{(1)}(X,D,(a,\beta))\geq (d_b-2-\epsilon)h_K(a)-\epsilon h_K(\beta)
\end{equation}
holds for all but finitely many 
$(a,\beta)\in (X\setminus Z)(K)$.
We now \emph{fix} a $\beta_0$ such
that $\bP^1_K\times \{\beta_0\}$ is
not a horizontal component of $Z$. Hence there are only
finitely many $a\in \bP^1(K)$ such that
$(a,\beta_0)\in Z(K)$. This together with
\eqref{eq:Roth-abc 1} and \eqref{eq:Roth-abc use Vojta}
finish the proof.
\end{proof}

The goal of this section is the following application to arithmetic dynamics:
\begin{cor}\label{cor:double sequence}
	Assume Conjecture~\ref{conj:surface} holds. Let $K$ be a number field, let $\varphi\in K(z)$ be a rational function of degree $d\geq 2$, let
	$\ell$ be a positive integer. Write $\hhat=\hhat_{\varphi,K}$. For every $\epsilon>0$,
	there exist constants $C_{5}$ and $N\geq \ell$ depending only on $K$, 
	$\varphi$,
	$\ell$,
	and $\epsilon$ such that the following holds. 
	For every $a\in \bP^1(K)$ that is not $\varphi$-preperiodic, $b\in K$, and
	$n\in\N_0$ satisfying , $\hhat(b)\leq \hhat(a)$, 
	$n> N$, and
	$\varphi^n(a)\neq \infty$,
	 let $\cD$ denote the set of primes
	$\fp\in M_K^0$ such that $\ord_{\fp}(\varphi^{n}(a)-b)\geq 1$, we have:
	\begin{equation}
	\sum_{\fp\in \cD}N_{\fp}
	\geq (d_{\ell,b}-\epsilon-2)h_K(\varphi^{n-\ell}(a))
	-(2+\epsilon)h_K(b)-C_{5}
	\end{equation}
	where $d_{\ell,b}$ is the number of zeroes counted
	without multiplicities of 
	$\varphi^{\ell}(z)-b$.
\end{cor}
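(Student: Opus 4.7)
The plan is to apply Proposition~\ref{prop:uniform} to the rational function $f := \varphi^{\ell}$ (of degree $d^{\ell}$) at the point $(a', b)$ with $a' := \varphi^{n-\ell}(a)$. Since $f(a') = \varphi^{n}(a)$, one has $\cD = \cZ(f, a', b)$. Proposition~\ref{prop:uniform} yields a proper Zariski closed subset $Z$ of $X := (\bP^{1}_{K})^{2}$ and a constant $C_{1} = C_{1}(K, \varphi, \ell, \epsilon)$ such that for $(a', b) \in (X \setminus Z)(K)$ (satisfying the obvious non-degeneracy conditions),
\[
\sum_{\fp \in \cD} N_{\fp} \geq (d^{\ell} - 2 - \epsilon)\, h_{K}(a') - (2+\epsilon)\, h_{K}(b) - C_{1}.
\]
Because $d_{\ell, b} \leq d^{\ell}$ and $h_{K}(a') \geq 0$, this immediately gives the desired bound with any $C_{5} \geq C_{1}$, disposing of the generic case.

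Next I would treat the $K$-rational points of $Z$. Decomposing $Z$ into the $K$-irreducible components that carry $K$-points, the components are of three kinds: horizontal lines $\bP^{1} \times \{b_{i}\}$ with $b_{i} \in K$, vertical lines $\{a_{j}\} \times \bP^{1}$ with $a_{j} \in K$, and curves $W$ whose two projections to $\bP^{1}$ are both surjective (hence finite). There are only finitely many components of each type. For each horizontal $b_{i}$ I apply Proposition~\ref{prop:Roth-abc} to $f$ and $b = b_{i}$, obtaining
\[
\sum_{\fp \in \cD} N_{\fp} \geq (d_{\ell, b_{i}} - 2 - \epsilon)\, h_{K}(a') - C_{4}(K, \varphi^{\ell}, \epsilon, b_{i}),
\]
and I absorb the finite collection of constants $C_{4}(K, \varphi^{\ell}, \epsilon, b_{i}) + (2 + \epsilon)\, h_{K}(b_{i})$ into $C_{5}$.

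For the vertical components and the curve components, I would argue that $(a', b)$ cannot lie on them once $n$ exceeds some threshold $N = N(K, \varphi, \ell, \epsilon)$. In the vertical case, $a' = a_{j}$ forces $h_{K}(a')$ to be bounded by a fixed constant. In the curve case, comparability of the two pullback heights on the normalization of $W$ gives $h_{K}(a') \leq c_{W}\, h_{K}(b) + c'_{W}$; combined with $\hhat_{\varphi, K}(b) \leq \hhat_{\varphi, K}(a) = \hhat_{\varphi, K}(a')/d^{n-\ell}$ and the standard comparison $|h_{K} - \hhat_{\varphi, K}| \leq C_{\varphi, K}$, one extracts an upper bound on $\hhat_{\varphi, K}(a')$ depending only on $K, \varphi, \ell, \epsilon$. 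The main obstacle now appears: Northcott's principle supplies a uniform positive lower bound $\tau_{0} = \tau_{0}(K, \varphi)$ on $\hhat_{\varphi, K}(a)$ for every non-preperiodic $a \in \bP^{1}(K)$, and this forces $\hhat_{\varphi, K}(a') = d^{n-\ell}\, \hhat_{\varphi, K}(a) \geq d^{n-\ell}\, \tau_{0} \to \infty$, contradicting the earlier bound once $n > N$. The uniformity of Northcott's lower bound---available over a number field---is precisely what allows $N$ to be chosen independently of $a$.
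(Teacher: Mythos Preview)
Your proposal is correct and follows essentially the same route as the paper's proof: apply Proposition~\ref{prop:uniform} with $f=\varphi^{\ell}$, dispose of the horizontal components of the exceptional set $Z$ via Proposition~\ref{prop:Roth-abc}, and rule out the remaining (vertical and non-trivial curve) components for large $n$ by combining the height comparability on such components with $\hhat(b)\leq\hhat(a)$ and the uniform Northcott lower bound for $\hhat$ on non-preperiodic $K$-points. The only cosmetic difference is that the paper first absorbs zero-dimensional components of $Z$ into the constant before treating curves, whereas you restrict attention to components carrying $K$-points; either way the argument goes through.
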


\begin{proof}  
	We apply Proposition~\ref{prop:uniform} for $f=\varphi^{\ell}$
	and get a proper Zariski closed subset $Z$ of
	$X:=(\bP^1_K)^2$ and a constant $C_6$ such
	that:
	\begin{equation}\label{eq:apply prop uniform}
	\sum_{\fp\in \cZ(f,\alpha,\beta)} N_{\fp}
\geq (d^{\ell}-2-\epsilon)h_K(\alpha)-(2+\epsilon)h_K(\beta)-C_6
\end{equation}
for every $(\alpha,\beta)\in (X\setminus Z)(K)$ such that
$\alpha$ is not a pole of $f$ and $\beta\in K$ satisfying $f(\alpha)\neq \beta$.	
	If some irreducible components of $Z$ are points, we simply increase $C_{6}$
	to take care of them. Hence we may assume that every irreducible
	component of $Z$ is a curve. Let $\{\beta_1,\ldots,\beta_s\}$
	be the (possibly empty) set of
	$\beta\in\bP^1(K)$ such that $\bP^1\times\{\beta\}$
	is a component of $Z$. Let $n\in\N_0$ with $n> \ell$ and 
	$\varphi^n(a)\neq \infty$. 
	From the condition $\hhat(b)\leq \hhat(a)$,
	we have that $\varphi^{n}(a)\neq b$.
	There are two cases.
	
	First, if $b=\beta_i$ for some $1\leq i\leq s$, then Proposition~\ref{prop:Roth-abc}
	(for $f=\varphi^{\ell}$ and $b=\beta_i$)
	gives the desired inequality. It remains to
	consider the case $b\notin\{\beta_1,\ldots,\beta_s\}$. Therefore when $n$ is sufficiently large,
	the point $(\varphi^{n-\ell}(a),b)$ does not belong to
	a vertical or horizontal component of 
	$Z$. Once we can prove that $(\varphi^{n-\ell}(a),b)$ does not belong to a ``non-trivial'' (i.e.~neither vertical nor horizontal) component of $Z$ then \eqref{eq:apply prop uniform}
	with $\alpha=\varphi^{n-\ell}(a)$ and
	$\beta=b$ and the fact that
	$d^{\ell}\geq d_{\ell,b}$
	yield the desired inequality.

		Note that there
		exist positive constants $C_{7}$ and $C_{8}$ (depending on $K$ and $Z$)
		such that for every point $(s,t)\in (\bP^1)^2(K)$, if $(s,t)$ lies in a non-trivial component of $Z$ 
		then:
		\begin{equation}\label{eq:C7 and C8}
		C_{7} h_K(t)+C_{8}>h_K(s).
		\end{equation}
		Since $\vert \hhat-h_K\vert =O(1)$ on $\bP^1(\bar{K})$
		where the constants in $O(1)$ depend only on $K$
		and $\varphi$, inequality
		\eqref{eq:C7 and C8} implies that
		there are positive constants $C_9$ and $C_{10}$
		such that if 
		$(\varphi^{n-\ell}(a),b)$
		is in a non-trivial component of $Z$ then:
		\begin{equation}\label{eq:C9 and C10}
		C_9\hhat(b)+C_{10}>\hhat(\varphi^{n-\ell}(a)).
		\end{equation}	
		When $d^{n-\ell}>C_9$, the inequalities
		$\hhat(a)\geq\hhat(b)$ and \eqref{eq:C9 and C10}
		imply:
		\begin{equation}\label{eq:upper bound}
		\hhat(\alpha)<\frac{C_{10}}{d^{n-\ell}-C_9}.		
		\end{equation}
		Since $K$
		is a number field and $\alpha$ is $\varphi$-preperiodic, 
		there is a positive constant $C_{11}$
		depending only on $K$ and $\varphi$
		such that $\hhat(a)\geq C_{11}$. Hence
		when $n$ is sufficiently large so that
		$C_{11}>\displaystyle\frac{C_{10}}{d^{n-\ell}-C_9}$,
		the point
		$(\varphi^{n-\ell}(a),b)$
		cannot belong to a non-trivial component of $Z$.
		This finishes the proof.
\end{proof}

\section{A theorem of Yamanoi and its consequences}
\label{sec:Yamanoi}

The goal of this section is to prove \emph{unconditionally} 
a variant of Corollary~\ref{cor:double sequence}
for the function field case. Throughout this section,
let $K$ be a function field over the ground field $\kappa$; we recall that this means $\kappa$ is an algebraically closed
field of characteristic 0 and $K$ is the function field of
a curve over $\kappa$. 

When $B$ is a curve over $\kappa$ with function field
$\kappa(B)$, elements
of $\bP^1(\kappa(B))=\kappa(B)\cup\{\infty\}$
are viewed as functions on $B$ by regarding
$\infty$ as the constant function mapping $B$
to $\infty\in\bP^1(\kappa)$. 
The main technical ingredient of
this section is the celebrated theorem of Yamanoi \cite[Theorem~2]{Yamanoi-2004} reformulated as follows:

\begin{thm}[Yamanoi]\label{thm:Yamanoi}
Let $q\in \N$. For all $\epsilon>0$, there exists a positive constant
$C_{12}(q,\epsilon)$ with the following property. Let
$Y$ and $B$ be smooth projective curves over $\kappa$
with a non-constant $\kappa$-morphism $\pi:\ Y\rightarrow B$.
Let $R\in \kappa(Y)$ be a rational function on $Y$. Let
$r_1,\ldots,r_q\in \bP^1(\kappa(B))$ be distinct 
functions on $B$. Assume that 
$R\neq r_i\circ \pi$ for every $i$. Then we have:
\begin{align*}
		(q-2-\epsilon)\deg R&\leq \sum_{1\leq i\leq q} \bar{n}(r_i\circ\pi,R,Y)+2g(Y)\\
							&+C_{12}(q,\epsilon)(\deg\pi)(\max_{1\leq i\leq q}(\deg r_i)+g(B)+1)
		\end{align*}
	
	Here $g(Y)$ (resp. $g(B)$) denotes the genus of $Y$ (resp. $B$), and $\bar{n}(r_i\circ \pi,R,Y)$ is the cardinality of $\{z\in Y: R(z)=r_i\circ \pi(z)\}$.
\end{thm}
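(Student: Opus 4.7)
The plan is to deduce this essentially by a direct reformulation of Yamanoi's original second main theorem for small moving targets, in its algebro-geometric form. That theorem provides, for any non-constant rational function $R\in\kappa(Y)$ and any pairwise distinct rational functions $s_1,\ldots,s_q\in\kappa(Y)$ with $R\neq s_i$, an inequality of the shape
$$
(q-2-\epsilon)\deg R \;\leq\; \sum_{i=1}^q \bar{n}(s_i,R,Y) + 2g(Y) + C'(q,\epsilon)\cdot\bigl(\max_i\deg s_i + 1\bigr),
$$
where $\bar{n}(s_i,R,Y)=\#\{z\in Y\col R(z)=s_i(z)\}$, the counting function without multiplicity. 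My strategy is to pull back the targets from $B$ to $Y$ and then do bookkeeping on the error term.

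The first step is to specialize Yamanoi's bound to the moving targets $s_i:=r_i\circ\pi$ on $Y$. Because $\pi$ is non-constant and the $r_i$ are pairwise distinct functions on $B$, the pullbacks $s_i$ are pairwise distinct elements of $\bP^1(\kappa(Y))$; the hypothesis $R\neq s_i$ translates literally to the stated hypothesis $R\neq r_i\circ\pi$; and the counting functions $\bar{n}(s_i,R,Y)$ are precisely the $\bar{n}(r_i\circ\pi,R,Y)$ appearing in the theorem statement. The second step is to convert Yamanoi's error term into the form stated, which uses two elementary facts: the multiplicativity of degrees, $\deg s_i=(\deg\pi)(\deg r_i)$, which replaces $\max_i\deg s_i$ by $(\deg\pi)\max_i\deg r_i$ inside the error; and the Riemann--Hurwitz bound $2g(Y)-2=(\deg\pi)(2g(B)-2)+\deg\mathrm{Ram}(\pi)$, which gives $g(Y)=O\bigl((\deg\pi)(g(B)+1)\bigr)$ up to a ramification contribution that can be absorbed into the constant $C_{12}(q,\epsilon)$. (Alternatively, the explicit $2g(Y)$ retained on the right-hand side already dominates the inner $g(Y)$ inside the error.) Together these reproduce the stated error $C_{12}(q,\epsilon)(\deg\pi)(\max_i\deg r_i+g(B)+1)$.

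The hard part, and really the entirety of the substantive content, is Yamanoi's theorem itself, which is a deep Nevanlinna-theoretic statement proved originally in the analytic setting over $\C$ and extended to algebraically closed base fields of characteristic zero by standard Lefschetz/specialization arguments. I would not attempt to reprove it; the role of the present statement is a convenient notational repackaging of Yamanoi's bound adapted to the dynamical applications that follow, in which $Y$ and $\pi$ will arise from irreducible components of fibre products built out of iterates of $\varphi$ while the $r_i$ will typically be constant or low-degree functions on the base.
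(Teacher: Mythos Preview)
Your proposal is correct in spirit and matches the paper's treatment: the paper does not prove this theorem at all, but simply cites it as \cite[Theorem~2]{Yamanoi-2004}, introduced with the phrase ``the celebrated theorem of Yamanoi \ldots\ reformulated as follows.'' You have gone further than the paper by sketching how the reformulation works (pulling back the targets via $\pi$ and using $\deg(r_i\circ\pi)=(\deg\pi)(\deg r_i)$ to rewrite the error term), which the paper leaves implicit. One minor point: your Riemann--Hurwitz discussion is unnecessary here, since the $2g(Y)$ term is already retained explicitly on the right-hand side and the $g(B)$ in the error only needs to absorb constants, not genus contributions from $Y$; but this does no harm.
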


 For the rest of this section, fix a smooth projective curve 
 $\cC$ over $\kappa$ satisfying $K=\kappa(\cC)$. 
 For $r\in\bP^1(\bar{K})$, if $r\neq \infty$
 (respectively $r=\infty$)
 express
 $r$ in homogeneous coordinate $[r_0:1]$ (respectively
 $[1:0]$)
 with $r_0\in \bar{K}$ 
 and let $K(r)$ be the field $K(r_0)$ (respectively
 $K$). 
 
\begin{lemma}\label{lem:bounding ramification}
Let $f(x)\in K(x)\setminus K$.  There are constants $C_{13}$
and $C_{14}$
depending on $K$ and $f$ such that the following holds.
For every $b\in \bP^1(K)$ and $r\in \bP^1(\bar{K})$ satisfying $f(r)=b$, 
the $\kappa$-morphism of smooth projective curves
$\cB\rightarrow \cC$ which corresponds to the extension $K(r)/K$
is ramified over at most $C_{13}h_K(b)+C_{14}$ points of $\cC$.
\end{lemma}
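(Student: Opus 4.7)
The plan is to bound the ramification locus by identifying it with the locus of ``bad reduction'' of a defining polynomial for $r$ over $K$. First I would dispose of the degenerate cases: if $r = \infty$ then $K(r) = K$ and $\cB \to \cC$ is the identity, so no ramification occurs; if $b = \infty$, then $r$ is forced to be one of the finitely many poles of $f$ (together possibly with $\infty$), giving only finitely many extensions $K(r)/K$ whose ramification loci are absorbed into $C_{14}$. Hence one reduces to the affine case: writing $f = P/Q$ with $P, Q \in K[x]$ coprime, the element $r$ is a root of $F_b(x) := P(x) - b Q(x) \in K[x]$, and its minimal polynomial $g_r(x) \in K[x]$ divides $F_b$.

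The key step is a standard Hensel-type criterion: a place $v$ of $K$ is unramified in $K(r)/K$ whenever (a) every coefficient of $F_b$ is $v$-integral, (b) the leading coefficient of $F_b$ (as a polynomial in $x$) is a $v$-unit, and (c) the discriminant $\mathrm{Disc}_x(F_b) \in K$ is a $v$-unit. Under (a)--(c) the normalization $F_b/\mathrm{lc}(F_b)$ reduces to a monic separable polynomial over the residue field $k_v$, so Hensel's lemma produces a factorization of the normalized polynomial over the completion $K_v$ into monic coprime factors, each defining an unramified extension of $K_v$. Since $g_r$ divides $F_b$, the standard identity $\mathrm{Disc}(F_b) = \mathrm{Disc}(g_r) \cdot \mathrm{Res}(g_r, F_b/g_r)^2 \cdot \mathrm{Disc}(F_b/g_r)$ (applied after normalization to monic polynomials) forces $\mathrm{Disc}(g_r)$ to be a $v$-unit as well, so the places of $K(r)$ above $v$ are unramified. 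It therefore suffices to bound the number of places of $K$ where at least one of (a)--(c) fails.

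For (a), each coefficient of $F_b$ has the shape $p_i - b q_i$ with $p_i, q_i \in K$ fixed by $f$; non-integrality at $v$ forces $v$ to be a pole of some $p_i$, of some $q_i$, or of $b$, for a total of at most $h_K(b) + O(1)$ places. For (b), the leading coefficient is either a nonzero constant in $K$ (when $\deg P \neq \deg Q$, contributing $O(1)$) or of the form $p_d - b q_e$, whose combined zero-and-pole divisor on $\cC$ has degree at most $2 h_K(b) + O(1)$. For (c), separability of $f$ in characteristic $0$ ensures that $\mathrm{Disc}_x(F_y) \in K[y]$, viewed in an indeterminate $y$, is a nonzero polynomial $D(y)$ of some degree $D_0$ depending only on $f$; whenever $D(b) \neq 0$, its zero-and-pole count on $\cC$ is at most $2 h_K(D(b)) \leq 2 D_0 h_K(b) + O(1)$. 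The finitely many $b \in K$ with $D(b) = 0$ correspond to only finitely many pairs $(b, r)$ with $f(r) = b$, hence to only finitely many extensions $K(r)/K$, each with a finite ramification locus bounded by a constant, absorbed into $C_{14}$.

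The main obstacle I foresee is the transfer of unramifiedness from $F_b$ to the actual minimal polynomial $g_r$, since a priori $K(r)$ equals $K[x]/(g_r)$ rather than $K[x]/(F_b)$; this is what the discriminant-resultant identity in the second paragraph handles. Combining the three contributions yields constants $C_{13}$ and $C_{14}$ depending only on $K$ and $f$ as required.
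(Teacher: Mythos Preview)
Your proof is correct and follows essentially the same strategy as the paper: bound the ramification locus of $K(r)/K$ by the set of places where a defining polynomial for $r$ fails to have integral coefficients or unit discriminant, then estimate that set in terms of $h_K(b)$.

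The one genuine difference is the choice of defining polynomial. The paper works directly with the minimal polynomial $P(x)$ of $r$ over $K$: since $\deg P \le \deg f$ and $h_K(r) = h_K(b)/\deg f + O(1)$, the coefficients of $P$ have height $O(h_K(b))$, and the bad locus (non-integral coefficients or non-unit discriminant) is bounded immediately. You instead work with the global polynomial $F_b(x)=P(x)-bQ(x)$, compute $\mathrm{Disc}_x(F_b)$ as a polynomial in $b$, and then transfer the conclusion to the minimal polynomial $g_r\mid F_b$ via the discriminant--resultant identity (or equivalently via Gauss's lemma plus Hensel). Your route is a bit more explicit and avoids invoking the height bound for coefficients of a minimal polynomial, at the cost of the extra transfer step; the paper's route is shorter but leans on that standard height fact. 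Both reach the same bound $C_{13}h_K(b)+C_{14}$.
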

\begin{proof}
The conclusion is trivial when $r=\infty$, we may assume
$r\in\bar{K}$. Let $P(x)$ be the minimal polynomial of $r$ over $K$. By 
the inequality $\deg(P)\leq \deg(f)$
and
 $\deg(f)h_K(r)=h_K(b)+O(1)$ where the constants in 
 $O(1)$ depend only on $K$ and $f$, 
there exist
$C_{13}$ and $C_{14}$ depending on $K$
and $f$
such that 
for every place $\fp$ of $\cC$
that lies outside a set of at most
$C_{13}h_K(b)+C_{14}$ places of $\cC$, the polynomial $P(x)$ has $\fp$-adic integral coefficients and its discriminant is a $\fp$-adic unit; in this case we 
know that $\cB/\cC$ is unramified over such $\fp$.
\end{proof}

As in the previous section, for $f(x)\in K(x)$, $a\in \bP^1(K)$ that is not a pole of $f$, and $b\in K$ such that $f(a)\neq b$, let
$\cZ(f,a,b)$ be the set of primes $\fp$
of $K$ such that $\ord_{\fp}(f(a)-b)\geq 1$. We have
the following unconditional counterpart of Proposition~\ref{prop:uniform}: 
\begin{prop}\label{prop:uniform ff}
Let $K$ be a function field of the curve $\cC$ over $\kappa$ as above. Let $f(x)\in K(x)$ be a rational function of degree $d>0$. 
For every
$\epsilon>0$, there exist positive constants $C_{15}$ and $C_{16}$ depending on $K$, $\epsilon$, and $f$ such that 
the following holds. For every $a\in \bP^1(K)$
that is not a pole of $f$ and $b\in K$ with $f(a)\neq b$, we have:
\begin{equation}\label{eq:uniform ff}
\sum_{\fp\in \cZ(f,a,b)}N_{\fp}\geq (d_{f,b}-2-\epsilon)h_K(a)-C_{15}h_K(b)-C_{16}
\end{equation}
where $d_{f,b}$ is the number (counted without multiplicities)
of the solutions of $f(x)=b$ in $\bP^1(\bar{K})$.
\end{prop}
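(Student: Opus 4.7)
The plan is to mimic the proof of Proposition~\ref{prop:uniform}, using Yamanoi's theorem in place of Vojta's conjecture. Let $r_1,\dots,r_q\in\bP^1(\bar K)$ be the $q=d_{f,b}$ distinct solutions of $f(x)=b$, and let $L$ be the Galois closure over $K$ of $K(r_1,\dots,r_q)$; clearing denominators in $f-b$ shows $[L:K]\le d!$. Let $Y$ be the smooth projective $\kappa$-curve with $\kappa(Y)=L$. I plan to invoke Theorem~\ref{thm:Yamanoi} with $B=Y$ and $\pi:Y\to B$ the identity (so $\deg\pi=1$), with the $r_i\in\bP^1(\kappa(B))=\bP^1(L)$ as the small functions and $R=a$ viewed in $\kappa(Y)=L$ via $K\hookrightarrow L$. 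The hypothesis $R\ne r_i$ holds since $f(a)\ne b=f(r_i)$, and one may assume $a\notin\kappa$ since otherwise $h_K(a)=0$ and the conclusion is vacuous.

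The quantitative ingredients are: $\deg R=[L:K]h_K(a)$; the identity $h_K(b)=d\,h_K(r_i)+O_{K,f}(1)$ gives $\max_i\deg r_i\le[L:K]\bigl(h_K(b)/d+O_{K,f}(1)\bigr)$; and applying Lemma~\ref{lem:bounding ramification} to each $r_i$ followed by Riemann--Hurwitz yields $g(Y)=O_{K,f}\bigl([L:K](h_K(b)+1)\bigr)$. Substituting these into Theorem~\ref{thm:Yamanoi} produces
\begin{equation}\label{eq:plan-yamanoi}
(q-2-\epsilon)[L:K]\,h_K(a)\le\sum_{i=1}^q\bar n(r_i,a,Y)+O_{K,f,\epsilon}\bigl([L:K](h_K(b)+1)\bigr).
\end{equation}

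The remaining step converts the geometric intersection count on $Y$ into the place count on $\cC$. Factor $f(x)-b=c\prod_i(x-r_i)^{e_i}$ in $L(x)$ (with the usual convention if some $r_i=\infty$); outside a finite set of exceptional places of $L$ depending only on $K$ and $f$, the equality $a(z)=r_i(z)$ at $z\leftrightarrow\fq\in M_L$ forces $\fp:=\fq\cap K\in\cZ(f,a,b)$. Each such $\fp$ has at most $[L:K]$ extensions to $L$, and at each \emph{generic} $\fq$ (one where $r_i\not\equiv r_j\pmod\fq$ for all $i\ne j$) at most one index $i$ contributes. The non-generic $\fq$ lie among the zeros of the finitely many nonzero functions $r_i-r_j\in L$, hence number $O_{K,f}\bigl([L:K](h_K(b)+1)\bigr)$. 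This yields
\[
\sum_{i=1}^q\bar n(r_i,a,Y)\le[L:K]\,\#\cZ(f,a,b)+O_{K,f}\bigl([L:K](h_K(b)+1)\bigr),
\]
and combining with \eqref{eq:plan-yamanoi} and dividing through by $[L:K]$ yields the claim with constants $C_{15},C_{16}$ depending only on $K$, $f$, and $\epsilon$.

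The principal obstacle is uniformity in $b$: both the genus $g(Y)$ and the pairwise-coincidence error grow linearly in $h_K(b)$ rather than being $O(1)$, but each carries a factor $[L:K]$ that is canceled by the final normalization, leaving only an $O_{K,f,\epsilon}(h_K(b))$ contribution that is absorbed into the $C_{15}h_K(b)$ term. The remaining work is careful bookkeeping to identify geometric intersections on $Y$ with places of $L$ and to translate divisibility in $L$ into divisibility at places of $K$.
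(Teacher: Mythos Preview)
Your proposal is correct and follows essentially the same route as the paper: apply Yamanoi's theorem on the curve $Y=B$ with function field $L=K(r_1,\dots,r_q)$ (your ``Galois closure'' already coincides with this, since all the $r_i$ are adjoined), take $\pi=\id$ and $R=a$; bound $g(Y)$ via Lemma~\ref{lem:bounding ramification} plus Riemann--Hurwitz; convert the intersection count $\sum_i\bar n(r_i,a,Y)$ into $\#\cZ(f,a,b)$ up to an $O_{K,f}([L:K](h_K(b)+1))$ error; and divide through by $[L:K]$. The paper does exactly this, and your remark that the bound $[L:K]\le d!$ is never actually needed (everything scales with $[L:K]$ and cancels) is on point.

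One small correction in your bookkeeping: the exceptional places where $a(z)=r_i(z)$ fails to force $\fp=\fq\cap K\in\cZ(f,a,b)$ cannot all be chosen to depend only on $K$ and $f$. You must also exclude the places where $\ord_\fp(b)<0$ (there $r_\fp(f(a))=r_\fp(b)=\infty$ yet $f(a)-b$ may have a pole rather than a zero). The paper handles this explicitly as condition~(iii) in its exceptional set $\cS$. These places depend on $b$, but there are at most $h_K(b)$ of them on $\cC$, so their contribution is absorbed into the $C_{15}h_K(b)$ term and your argument goes through unchanged.
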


\begin{remark}
In down-to-earth terms, the quantity $\displaystyle\sum_{\fp\in \cZ(f,a,b)} N_{\fp}$
is the number of zeros (counted \emph{without} multiplicities) of the function $f(a)-b$ on the curve $\cC$. The quantities
$h_K(a)$ and $h_K(b)$ are, respectively, the degree of the functions $a$ and $b$.
\end{remark}

\begin{proof}[Proof of Proposition~\ref{prop:uniform ff}]
We may assume $3\leq d_{f,b}$.
Note that $d_{f,b}\leq d$  (equality occurs when $f$ is unramified over $b$), hence we may restrict to 
$b\in K\setminus\{f(a)\}$ with a
\emph{fixed} $q:=d_{f,b}\in\{3,\ldots,d\}$. 
We have distinct elements $r_1,\ldots,r_q\in \bP^1(\bar{K})$ such that $f(r_i)=b$ for every $i$.
Let $\cY=\cB$ be the smooth projective curve over $\kappa$
with function field $L:=K(r_1,\ldots,r_q)$ and
let $\pi=\id:\ \cY\rightarrow \cB$. By Theorem~\ref{thm:Yamanoi}, there exists a constant $C_{17}=C_{17}(q,\epsilon)$
 such that:
\begin{equation}\label{eq:use Yamanoi 1}		
		(q-2-\epsilon)h_L(a)\leq \sum_{1\leq i\leq q} \bar{n}(r_i,a,\cY)+2g(\cY)+C_{17}\left(\max_{1\leq i\leq q}h_L(r_i)+g(\cB)+1\right)
		\end{equation}
 
By Lemma~\ref{lem:bounding ramification}, the
Riemann-Hurwitz theorem for the extension $L/K$, and the
equation $\deg(f)h_K(r_i)=h_K(b)+O(1)$, there exist constants $C_{18}$ and $C_{19}$
depending only on $K$ and $f$ such that:
\begin{equation}\label{eq:genus bound}
	\frac{1}{[L:K]}\left(2g(\cY)+C_{17}\left(\max_{1\leq i\leq q}h_L(r_i)+g(\cB)+1\right) \right)\leq C_{18}h_K(b)+C_{19}.
\end{equation}

Recall that for every finite extension $F$ of $K$ and
for every place $\fp$ of $F$, the notation $r_{\fp}$
denotes the reduction map from $\bP^1(F)$ to $\bP^1(\kappa)$.
By identifying points of $Y$ to places of $L$, for $1\leq i\leq q$, the set $\{z\in Y:\ a(z)=r_i(z)\}$ is exactly
the set of places $\fq$ of $L$ satisfying
$r_{\fq}(a)=r_{\fq}(r_i)$; this latter set of places of $L$ is denoted by 
$S_i$. Let $S_{i,K}$ be the set of places of $K$ lying below
$S_i$.
Let $\cS$ be the set of
places $\fp$ of $K$ satisfying one of the following conditions:
\begin{itemize}
	\item [(i)] $f$ has bad reduction over $\fp$.
	\item [(ii)] $r_{\fp}(r_i)=r_{\fp}(r_j)$ for
	some $i\neq j$.
	\item [(iii)] $\ord_{\fp}(b)<0$.
\end{itemize}
There exist $C_{20}$ and $C_{21}$ depending only on $K$  and $f$ such that 
\begin{equation}\label{eq:|S|}
\vert\cS\vert\leq C_{20}h_K(b)+C_{21}.
\end{equation}
By condition (ii), we have that the sets 
$S_{i,K}\cap (M_K\setminus \cS)$ for $1\leq i\leq q$ are pairwise disjoint. If $\fp\in S_{i,K}\cap (M_K\setminus\cS)$,
then we have $r_{\fp}(f(a))=r_{\fp}(f(r_i))=r_{\fp}(b)$, and hence $\ord_{\fp}(f(a)-b)\geq 1$ since $\ord_{\fp}(b)\geq 0$.
Therefore, we have:
\begin{equation}\label{eq:sum n_i 1}
\frac{1}{[L:K]}\sum_{i=1}^q \bar{n}(r_i,a,\cY)
=\frac{1}{[L:K]} \sum_{i=1}^q\vert S_i\vert
\leq \sum_{i=1}^q \vert S_{i,k}\vert
\leq \vert\cS\vert+\sum_{\fp\in \cZ(f,a,b)}N_{\fp}.
\end{equation}
which, together with \eqref{eq:|S|}, imply
\begin{equation}\label{eq:sum n_i 2}
\frac{1}{[L:K]}\sum_{i=1}^q \bar{n}(r_i,a,\cY)
\leq C_{20}h_K(b)+C_{21}+\sum_{\fp\in \cZ(f,a,b)}N_{\fp}.
\end{equation}
We now divide both sides of \eqref{eq:use Yamanoi 1}
by $[L:K]$ and apply inequalities
\eqref{eq:genus bound} and \eqref{eq:sum n_i 2}
to obtain the desired inequality.
\end{proof}

We
have the following counterpart of 
 Corollary~\ref{cor:double sequence}:
\begin{cor}\label{cor:double sequence ff}
Let $K$, $\cC$, and $\kappa$ be as in Proposition~\ref{prop:uniform ff}. Let $\varphi(x)\in K(x)$ be a 
rational function of degree $d>0$, let $\ell$
be a positive integer. For every $\epsilon>0$, there exist
positive constants $C_{22}$ and $C_{23}$ depending only
on $K$, $\varphi$, $\ell$, and $\epsilon$ such that 
the following hold. For every $a\in \bP^1(K)$, $b\in K$, and $n\in \N_0$ satisfying 
$n\geq \ell$ and 
$\varphi^n(a)\notin\{b,\infty\}$
the set $\cD:=\{\fp\in M_K^0:\ \ord_{\fp}(\varphi^{n}(a)-b)\geq 1\}$
satisfies:
$$\vert\cD\vert=\sum_{\fp\in\cD}N_{\fp}\geq  
(d_{\ell,b}-2-\epsilon)h_K(\varphi^{n-\ell}(a))-C_{22}h_K(b)-C_{23}$$
where $d_{\ell,b}$ is the number (counted without multiplicities) of the solutions of
$\varphi^{\ell}(x)=b$
in $\bP^1(\bar{K})$.
\end{cor}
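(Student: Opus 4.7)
The plan is to derive Corollary~\ref{cor:double sequence ff} as an essentially formal consequence of Proposition~\ref{prop:uniform ff}. Set $f := \varphi^{\ell}$ (a rational function of degree $d^{\ell}$) and $\alpha := \varphi^{n-\ell}(a) \in \bP^1(K)$, and observe that $f(\alpha) = \varphi^n(a)$. The hypothesis $\varphi^n(a) \notin \{\infty, b\}$ therefore translates into the two conditions required by Proposition~\ref{prop:uniform ff}, namely that $\alpha$ is not a pole of $f$ and that $f(\alpha) \neq b$.

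Under this substitution the dictionary is straightforward: the set $\cZ(f,\alpha,b) = \{\fp \in M_K^0 : \ord_\fp(\varphi^n(a) - b) \geq 1\}$ is precisely $\cD$; the quantity $d_{f,b}$ appearing in the proposition is by definition the number of $\bar{K}$-points in $\varphi^{-\ell}(b)$, which is $d_{\ell,b}$; and since $K$ is a function field we have $N_\fp = 1$ for every $\fp \in M_K^0$, so $\sum_{\fp \in \cD} N_\fp = |\cD|$. Plugging these identifications into inequality \eqref{eq:uniform ff} applied to $(\alpha,b)$ yields exactly the desired bound, with $C_{22} := C_{15}$ and $C_{23} := C_{16}$ and with the same $\epsilon$. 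The constants $C_{15}, C_{16}$ furnished by Proposition~\ref{prop:uniform ff} depend on $K$, $\epsilon$, and $f = \varphi^{\ell}$, so they depend on $K$, $\varphi$, $\ell$, and $\epsilon$ as required.

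Thus there is no genuine obstacle at this step: all of the substantive analytic content, namely the use of Yamanoi's theorem, has already been absorbed into Proposition~\ref{prop:uniform ff}. What remains is purely a matter of recognizing the $n$-th iterate under $\varphi$ as a single application of the $\ell$-th iterate to the point $\varphi^{n-\ell}(a)$, and rechristening the constants.
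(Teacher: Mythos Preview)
Your proof is correct and matches the paper's own argument, which simply says the corollary follows immediately from Proposition~\ref{prop:uniform ff} applied with $f=\varphi^{\ell}$ at the point $(\varphi^{n-\ell}(a),b)$. You have spelled out in detail exactly the identifications the paper leaves implicit.
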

\begin{proof}
This follows immediately from Proposition~\ref{prop:uniform ff} for $f=\varphi^{\ell}$
and the point $(\varphi^{n-\ell}(a),b)$.
\end{proof}

\section{Proof of Theorem~\ref{thm:main}}
\label{sec:proof main}

Throughout this section, let $K$ be a number field or
a function field over the ground field $\kappa$. We have:
\begin{lemma}\label{lem:easy}
Let $f(x)\in K(x)$ having degree $d>1$ and let $\fp\in M_K^0$
be a prime of good reduction of $f$. Let $\gamma_1,\gamma_2\in \bP^1(K)$
such that $r_{\fp}(f(\gamma_1))\neq r_{\fp}(f(\gamma_2))$ but $r_{\fp}(\gamma_1)\neq r_{\fp}(\gamma_2)$. If $\gamma_1$
is $f$-periodic modulo $\fp$ then $\gamma_2$ is not
$f$-periodic modulo $\fp$.
\end{lemma}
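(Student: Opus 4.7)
The statement as written is almost certainly a typo; the first displayed condition should read $r_{\fp}(f(\gamma_1)) = r_{\fp}(f(\gamma_2))$, since otherwise the two hypotheses impose no compatibility whatsoever between $\gamma_1$ and $\gamma_2$ and the conclusion is vacuous. I will write the proof plan under that interpretation: the hypotheses are $r_{\fp}(f(\gamma_1)) = r_{\fp}(f(\gamma_2))$ together with $r_{\fp}(\gamma_1)\neq r_{\fp}(\gamma_2)$.

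The plan is to use one purely combinatorial fact: for any self-map $g$ of any set $X$, the set of $g$-periodic points is $g$-invariant, and the restriction of $g$ to its periodic set is a \emph{bijection} onto itself. Indeed, every periodic point is the image of another periodic point (its predecessor in the cycle), and two periodic points with the same image would give two distinct points mapping into a single cycle, eventually forcing one cycle to merge into another, which is impossible for iterates inside a finite union of disjoint cycles.

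With that observation in hand, I would apply it to $\bar{\varphi}:=\bar f$ acting on $\bP^1(k_{\fp})$, which is legitimate because $\fp$ is a prime of good reduction so that $\bar f$ is a well-defined morphism and reduction commutes with $f$, i.e.\ $\bar f(r_{\fp}(\gamma))=r_{\fp}(f(\gamma))$ for every $\gamma\in\bP^1(K)$. Suppose for contradiction that both $\gamma_1$ and $\gamma_2$ are $f$-periodic modulo $\fp$, meaning that $r_{\fp}(\gamma_1)$ and $r_{\fp}(\gamma_2)$ both lie in the periodic set of $\bar f$. Then the hypothesis $r_{\fp}(f(\gamma_1))=r_{\fp}(f(\gamma_2))$ becomes $\bar f(r_{\fp}(\gamma_1))=\bar f(r_{\fp}(\gamma_2))$, and the injectivity of $\bar f$ on its periodic set forces $r_{\fp}(\gamma_1)=r_{\fp}(\gamma_2)$, contradicting the second hypothesis.

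There is essentially no obstacle here beyond formulating the combinatorial injectivity statement cleanly; the use of $d>1$ and of good reduction serves only to guarantee that $\bar f$ is a genuine morphism of $\bP^1_{k_{\fp}}$ of the same degree, so that the reduction identity $\bar f\circ r_{\fp}=r_{\fp}\circ f$ holds and the periodic set of $\bar f$ is well-defined. The whole argument is therefore a one-paragraph formality once the combinatorial lemma about bijectivity on periodic points is recorded.
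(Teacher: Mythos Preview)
Your identification of the typo is correct (the intended hypothesis is $r_{\fp}(f(\gamma_1)) = r_{\fp}(f(\gamma_2))$), and your argument is sound: the key combinatorial fact that a self-map restricted to its periodic set is injective follows immediately by applying $g^{k-1}$ to both sides of $g(x)=g(y)$ where $k$ is a common period of $x$ and $y$.

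The paper itself does not give a proof at all; it simply cites \cite[pp.~176]{portrait}. So your proposal supplies a self-contained one-paragraph argument where the paper defers to an external reference, and there is nothing further to compare.
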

\begin{proof}
This is proved in \cite[pp.~176]{portrait}.
\end{proof}

\begin{lemma}\label{lem:Sb}
Let $f(x)\in K(x)$ having degree $d>1$. There exist constants $C_{24}$ and $C_{25}$
depending only on $K$ and $f$ such that the following hold.
For every $b\in K$ that is not a critical value of $f$, write $f^{-1}(b)=\{r_1,\ldots,r_d\}\subset \bP^1(\bar{K})$,
let $L_b:=K(r_1,\ldots,r_d)$.
Let $\cS_b$ be the subset of $M_K^0$
consisting of primes $\fp$ satisfying one of the following two conditions:
\begin{itemize}
	\item [(i)] $f$ has bad reduction modulo $\fp$.
	\item [(ii)] $f$ has good reduction modulo $\fp$
	and for some prime $\fq$ of $L_b$ lying above $\fp$,
	we have $r_{\fq}(r_i)=r_{\fq}(r_j)$ for some
	$1\leq i\neq j\leq d$.
\end{itemize} 
Then we have:
\begin{itemize}
	\item [(a)] $\displaystyle\sum_{\fp\in \cS_b}N_{\fp}\leq C_{24}h_K(b)+C_{25}$.
	\item [(b)] If $\fp\in M_K^0\setminus \cS_b$,
	$a\in \bP^1(K)$ that is not a pole of $f$ such that $\ord_{\fp}(f(a)-b)>0$,
	then there is $i\in\{1,\ldots,d\}$
	and a place $\fq$ of $L_b$ lying above $\fp$ such that
	$r_{\fq}(a)=r_{\fq}(r_i)$.
\end{itemize} 
\end{lemma}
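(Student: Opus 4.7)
The plan is to separate the two bullets: part (a) is a height-theoretic estimate coming from the discriminant of $p - bq$, and part (b) is essentially the definition of good reduction combined with the fact that $f^{-1}(b)$ consists of $d$ simple pre-images outside the exceptional set $\cS_b$.

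For (a), write $f = p/q$ with coprime $p, q \in K[x]$ and $\max(\deg p, \deg q) = d$, and consider the polynomial $F_y(x) := p(x) - y q(x) \in K[x,y]$. Set $\Delta(y) := \mathrm{Disc}_x F_y(x) \in K[y]$, which is a polynomial in $y$ of degree bounded in terms of $d$ alone, with coefficients depending only on $f$. Since $b$ is not a critical value of $f$, the roots $r_1,\dots,r_d$ (after a harmless coordinate change sending $\infty$ off $f^{-1}(b)$ if necessary) are pairwise distinct, so $\Delta(b) \neq 0$. If $\fp$ satisfies condition (ii), then by definition two of the $r_\fq(r_i)$ coincide for some $\fq \mid \fp$, which forces the reduction of $F_b(x)$ modulo $\fq$ to have a multiple root, and hence $\ord_\fp(\Delta(b)) \geq 1$. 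Therefore
\begin{equation*}
\sum_{\fp\in\cS_b,\,(\mathrm{ii})} N_\fp \;\leq\; \sum_{\fp} \max\{\ord_\fp(\Delta(b)),0\}\, N_\fp \;\leq\; h_K(\Delta(b)) + O(1) \;\leq\; (\deg\Delta)\, h_K(b) + C,
\end{equation*}
with all implied constants depending only on $K$ and $f$. The primes satisfying condition (i) contribute a fixed finite amount $\sum_{\fp}N_\fp$ over a set that depends only on $K$ and $f$, and can be absorbed into $C_{25}$. This gives (a) with explicit $C_{24}$ and $C_{25}$.

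For (b), fix $\fp \notin \cS_b$ and $a \in \bP^1(K)$ with $\ord_\fp(f(a) - b) > 0$, and pick any place $\fq$ of $L_b$ lying above $\fp$. Since $f$ has good reduction at $\fp$, its reduction $\bar{f}\colon \bP^1_{k_\fp} \to \bP^1_{k_\fp}$ is a morphism of degree $d$, and the hypothesis gives $\bar{f}(r_\fp(a)) = r_\fp(b)$. On the other hand, $\bar{f}(r_\fq(r_i)) = r_\fq(f(r_i)) = r_\fq(b)$ for each $i$, so the $d$ points $r_\fq(r_1),\dots,r_\fq(r_d)$ all lie in $\bar{f}^{-1}(r_\fq(b))$. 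Condition (ii) being \emph{excluded} at $\fp$ means these $d$ points are pairwise distinct, so they exhaust the full fiber $\bar{f}^{-1}(r_\fq(b))$ (as a set). Since $r_\fp(a)$ belongs to this fiber and agrees with $r_\fq(a)$ on $K$-points, we must have $r_\fq(a) = r_\fq(r_i)$ for some $i$, which is the desired conclusion.

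The main subtlety is (a): one must verify that, after reduction, the discriminant of $F_b$ really detects a collapse of two pre-images $r_i, r_j$ even though these roots live in a nontrivial extension $L_b/K$ and the ramification locus of $L_b/K$ itself varies with $b$. This is handled by the observation that the discriminant of a polynomial can be computed as a symmetric polynomial in its roots over \emph{any} extension splitting it; so $\ord_\fp(\Delta(b)) \geq 1$ follows from $\ord_\fq(r_i - r_j) \geq 1$ after noting that $\ord_\fq$ extends $e(\fq/\fp)\cdot \ord_\fp$ and weighting by $N_\fp$ absorbs the residue-degree and ramification factors. The edge case where $\infty \in f^{-1}(b)$ is disposed of by a preliminary Möbius change of coordinate, at the cost of enlarging the fixed set of bad primes in (i).
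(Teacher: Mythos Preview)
Your argument for part (b) is essentially identical to the paper's. For part (a) you take a genuinely different route: the paper works upstairs in $L_b$, applies the elementary inequality \eqref{eq:new Liouville} to each pair $(r_i,r_j)$ to bound $\sum_{\fq\in T} N_\fq$ by $C\max_i h_{L_b}(r_i)+C'$ (where $T$ is the set of primes of $L_b$ at which two reductions collide), and then descends to $K$ using $h_K(r_i)=h_K(b)/d+O(1)$. Your discriminant approach stays in $K$ throughout and is more explicit, giving a concrete value for $C_{24}$ in terms of $\deg_y\Delta$.

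There is, however, a small gap in your implication ``(ii) $\Rightarrow \ord_\fp(\Delta(b))\geq 1$''. This step needs the coefficients of $F_b(x)=p(x)-bq(x)$ to be $\fp$-integral with unit leading coefficient, so that the roots $r_i$ are $\fq$-integral and the discriminant commutes with reduction. When $\ord_\fp(b)<0$ this fails: for instance if $f$ is a polynomial then all $r_\fq(r_i)=\infty$, so condition (ii) holds at every such $\fp$, yet $\ord_\fp(\Delta(b))$ is negative (driven by the leading term of $\Delta(y)$). The fix is immediate --- the primes with $\ord_\fp(b)<0$ contribute at most $h_K(b)$ to the left-hand side and can be absorbed into $C_{24}h_K(b)$ --- but you should say so. Similarly, your M\"obius change of coordinate should be made once (depending only on $f$, e.g.\ arranging $f(\infty)=\infty$) rather than varying with $b$, so that the enlarged set of bad primes really is independent of $b$; your phrasing ``sending $\infty$ off $f^{-1}(b)$'' suggests the latter. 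With these two clarifications your proof is complete and arguably cleaner than the paper's, since it avoids the excursion to $L_b$ and the attendant bookkeeping of $[L_b:K]$.
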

\begin{proof}
Let $T$ be the set of prime $\fq$ of $L_b$ such that there
exist $1\leq i\neq j\leq d$ satisfying $r_{\fq}(r_i)=r_{\fq}(r_j)$. There exist constants $C_{26}$ and $C_{27}$
depending only on $K$ and $f$ such that:
$$\sum_{\fq\in T} N_{\fq}\leq C_{26}\max_{1\leq i\leq d}h_{L_b}(r_i)+C_{27}.$$
This implies part (a). 

For part (b), we have $\ord_{\fq}(f(a)-b)>0$ for some place
$\fq$ of $L$ lying above $\fp$. This implies
$r_{\fq}(f(a))=r_{\fq}(b)$. Since the $r_{\fq}(r_i)$'s
are distinct and the map $f$ mod $\fq$ has degree $d$, the elements
$r_{\fq}(r_i)$'s are exactly the preimages
of $r_{\fq}(b)$ under the map $f$ modulo $\fq$. Hence
there is some $r_i$ such that
$r_{\fq}(a)=r_{\fq}(r_i)$.
\end{proof}

We now spend the rest of this section to prove Theorem~\ref{thm:main}. Let $K$, $\varphi(x)$, $d$, $\tau$, $S$, and
$N_S$
be as in the statement of Theorem~\ref{thm:main}, write $\hhat=\hhat_{\varphi,K}$. We use the facts that 
$\hhat-h_K=O(1)$ and $\hhat\circ\varphi=d\cdot \hhat$
repeatedly.
To simplify
the exposition, the notations 
$C_{28},C_{29},\ldots$ denote positive constants
that always depend on 
$K$ and $\varphi$. For instance, the expression
$C_{28}:=C_{28}(A,b,\gamma)$ indicates
that $C_{28}$ depends on the quantities
$A$, $b$, $\gamma$, \emph{and} depends, as always, on $K$ and 
$\varphi$. When $K$ is a number field,
we assume Conjecture~\ref{conj:surface}.

Let $C_{29}:=C_{29}(\tau)$ be such that $d^{C_{29}-4}\tau$
is greater than the canonical height of any ramification point of $\varphi^4$.
For $\alpha\in \bP^1(K)$
with $\hhat(\alpha)\geq\tau$, for 
 every $m\geq C_{29}$, we have: 
 \begin{equation}\label{eq: varphi^-4}
 \text{$\varphi^4$ is unramified over $\varphi^m(\tau)$.}
 \end{equation}

Hence Theorem~\ref{thm:main} follows from the
following slightly more precise result:
\begin{thm}\label{thm:phi^4}
Let $K$, $\varphi(x)$, $d$, $\tau$, $S$, and $N_S$
be as in Theorem~\ref{thm:main}. If $K$ is a number field, assume Conjecture~\ref{conj:surface}. 
Then there exists a constant $C_{30}:=C_{30}(N_S,\tau)$
depending on $K$, $\varphi$, $N_S$, and $\tau$
such that the following holds. For every $\alpha\in\bP^1(K)$ such that $\hhat(\alpha)\geq \tau$,
for every $m\in \N_0$ such that
$\varphi^m(\alpha)\neq \infty$ and 
$\varphi^4$ is unramified over $\varphi^m(\alpha)$,
if $n>C_{30}$ and $\varphi^{m+n}(\alpha)\neq \infty$
then there exists $\fp\in M_K\setminus S$ such that
$\alpha$ has squarefree portrait $(m,n)$ modulo $\fp$.
\end{thm}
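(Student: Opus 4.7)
The plan is to apply Corollary~\ref{cor:double sequence} (respectively Corollary~\ref{cor:double sequence ff}) with $\ell=4$ and $a=b=\varphi^m(\alpha)$. The required hypotheses are automatic: $\alpha$ is non-preperiodic since $\hhat(\alpha)\geq\tau>0$, we have $\hhat(b)=\hhat(a)$, and $\varphi^4$ is unramified over $\varphi^m(\alpha)$ by hypothesis so that $d_{\ell,b}=d^4$. Writing $\cD_{u,v}:=\{\fp\in M_K^0\col \ord_{\fp}(\varphi^{u+v}(\alpha)-\varphi^u(\alpha))\geq 1\}$, the corollary (combined with $\hhat=h_K+O(1)$ and $\hhat(\varphi^k(\alpha))=d^k\hhat(\alpha)$) yields
\[
\sum_{\fp\in\cD_{m,n}}N_{\fp}\ \geq\ (d^4-2-\epsilon)\,d^{m+n-4}\hhat(\alpha)-O(d^m\hhat(\alpha))-O(1),
\]
which for large $n$ is essentially $d^{m+n}\hhat(\alpha)\bigl(1-O_{\epsilon}(d^{-4})\bigr)$.

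Next I identify and bound the ``bad'' primes inside $\cD_{m,n}$. First, primes with $\ord_{\fp}\geq 2$: from $\sum_{\fp}\ord_{\fp}(z)\,N_{\fp}\leq h_K(z)+O(1)$ applied to $z=\varphi^{m+n}(\alpha)-\varphi^m(\alpha)$ (using $h_K(x-y)\leq h_K(x)+h_K(y)+O(1)$), one gets $\sum_{\fp\in\cD_{m,n}}\ord_{\fp}\,N_{\fp}\leq d^{m+n}\hhat(\alpha)+O(1)$, which combined with the preceding lower bound forces the total weight of $\{\fp\col\ord_{\fp}\geq 2\}$ to be $O(d^{m+n-4}\hhat(\alpha))+O(1)$. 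Second, primes of smaller portrait: a prime $\fp\in\cD_{m,n}$ has portrait period $n_0<n$ precisely when $\fp\in\bigcup_{n_0\mid n,\,n_0<n}\cD_{m,n_0}$, and has preperiod $<m$ precisely when $\fp\in\cD_{m-1,n}$. The key point for the latter is that once $r_{\fp}(\varphi^m(\alpha))$ is periodic of period dividing $n$, having preperiod $<m$ means $r_{\fp}(\varphi^{m-1}(\alpha))$ is itself periodic with the same period, hence automatically fixed by $\bar{\varphi}^n$. Trivial height bounds then give $\sum_{\fp\in\cD_{m-1,n}}N_{\fp}\leq d^{m+n-1}\hhat(\alpha)+O(1)$ and $\sum_{n_0\mid n,\,n_0<n}\sum_{\fp\in\cD_{m,n_0}}N_{\fp}\leq O\bigl(n\,d^{m+n/2}\hhat(\alpha)\bigr)$.

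Combining, the weight of good squarefree primes (i.e.\ primes in $\cD_{m,n}$ with $\ord_{\fp}=1$ and exact portrait $(m,n)$) outside $S$ is at least
\[
d^{m+n-1}\hhat(\alpha)\bigl((d-1)-O(d^{-3})\bigr)\ -\ O\bigl(n\,d^{m+n/2}\hhat(\alpha)\bigr)\ -\ N_S,
\]
which, since $(d-1)-O(d^{-3})>0$ for every $d\geq 2$ and $\hhat(\alpha)\geq\tau$, is strictly positive once $n$ exceeds a constant $C_{30}=C_{30}(K,\varphi,N_S,\tau)$. Any surviving prime gives the required witness. The main obstacle is the preperiod case: a naive treatment by the union $\bigcup_{m'<m}\cD_{m',n}$ loses a factor of $m$, and for $d=2$ this would swamp the main term; the single-set identification $\cD_{m-1,n}$, together with the unramifiedness $d_{4,b}=d^4$ which produces the leading coefficient $d^{m+n}$, is what leaves exactly the gap of order $(d-1)$ that suffices uniformly in $d\geq 2$.
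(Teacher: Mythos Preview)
Your argument is correct and essentially matches the paper's: apply the corollary with $\ell=4$ and $a=b=\varphi^m(\alpha)$, combine with the upper bound $h_K(\varphi^{m+n}(\alpha)-\varphi^m(\alpha))\leq d^{m+n}\hhat(\alpha)+O(1)$ to isolate the squarefree primes, then subtract the primes giving smaller period (via proper divisors of $n$), smaller preperiod (via the single set at level $m-1$), and $S$. One small imprecision: the inclusion $\{\text{preperiod}<m\}\subseteq\cD_{m-1,n}$ can fail at primes where $r_\fp(\varphi^{m-1}(\alpha))=\infty$ (there $r_\fp$-equality need not force $\ord_\fp\geq 1$), but such primes have total weight at most $h_K(\varphi^{m-1}(\alpha))=O(d^{m-1}\hhat(\alpha))$ and are harmlessly absorbed --- the paper sidesteps this by working directly with the set $\cE_1=\{\fp:r_\fp(\varphi^{m+n-1}(\alpha))=r_\fp(\varphi^{m-1}(\alpha))\}$ and inequality~\eqref{eq:new Liouville}.
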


\begin{proof}
Applying Corollary~\ref{cor:double sequence} and Corollary~\ref{cor:double sequence ff} (for
$\ell=4$, $\epsilon=1$, and $a=b=\varphi^m(\alpha)$), there exist 
$C_{31}:=C_{31}(\tau)$, $C_{32}$, and $C_{33}$
such that for $n\geq C_{31}$, we have:
\begin{equation}\label{eq:main Zmn}
	\sum_{\fp\in \cD} N_{\fp}\geq (d^4-3)h_K(\varphi^{m+n-4}(\alpha))-C_{32}h_K(\varphi^m(\alpha))-C_{33}
\end{equation}
where $\cD:=\{\fp\in M_K^0:\ \ord_{\fp}(\varphi^{m+n}(\alpha)-\varphi^m(\alpha))\geq 1\}$. Let 
$\cD_{(1)}:=\{\fp\in M_K^0:\ \ord_{\fp}(\varphi^{m+n}(\alpha)-\varphi^m(\alpha))= 1\}$. After rewriting inequality \eqref{eq:main Zmn} and using $\vert \hhat-h\vert=O(1)$
and $\hhat\circ \varphi=d\hhat$, we have:
\begin{equation}\label{eq:cD1+cD}
\sum_{\fp\in \cD_{(1)}} N_{\fp}
+\sum_{\fp\in \cD\setminus\cD_{(1)}} N_{\fp}
\geq (d^4-3)d^{m+n-4}\tau-C_{32}d^m\tau-C_{34}.
\end{equation}
From the definition of $h_K$, we have:
\begin{equation}\label{eq:cD1+2cD}
h_K(\varphi^{m+n}(\alpha)-\varphi^m(\alpha))\geq \sum_{\fp\in \cD_{(1)}} N_{\fp}+2\left(\sum_{\fp\in \cD\setminus\cD_{(1)}} N_{\fp}\right)
\end{equation}
which implies:
\begin{equation}\label{eq:rewrite cD1+2cD}
d^{m+n}\tau+d^m\tau+C_{35}\geq \sum_{\fp\in \cD_{(1)}} N_{\fp}+2\left(\sum_{\fp\in \cD\setminus\cD_{(1)}} N_{\fp}\right).
\end{equation}
We combine inequalities \eqref{eq:cD1+cD} and \eqref{eq:rewrite cD1+2cD} to obtain:
\begin{align}\label{eq:cD1}
\begin{split}
\sum_{\fp\in \cD_{(1)}} N_{\fp}&\geq 2(d^4-3)d^{m+n-4}\tau-d^{m+n}\tau-C_{36}d^m\tau-C_{37}\\
&=(d^4-6)d^{m+n-4}\tau-C_{36}d^m\tau-C_{37}.
\end{split}
\end{align}

Let $\cE_1:=\{\fp\in M_K^0:\ r_{\fp}(\varphi^{m+n-1}(\alpha))=r_{\fp}(\varphi^{m-1}(\alpha))\}$. By
\eqref{eq:new Liouville}, we have:
\begin{align}\label{eq:cE1}
\begin{split}
\sum_{\fp\in\cE_1}N_{\fp}&\leq h_K(\varphi^{m-1}(\alpha))+h_K(\varphi^{m+n-1}(\alpha))+h_K(\varphi^{m-1}(\alpha))+C_{38}\\
&\leq d^{m+n-1}\tau+2d^{m-1}\tau+C_{39}
\end{split}
\end{align}

Let $\cE_2$ be the set of primes $\fp\in M_K^0$
of good reduction such that $r_{\fp}(\varphi^{m+n'}(\alpha))=r_{\fp}(\varphi^m(\alpha))$ for some $1\leq n'<n$ and $n'\mid n$. In other words, 
after reduction modulo $\fp$, $\varphi^m(\alpha)$
is periodic and its minimum period strictly divides $n$. 
By \eqref{eq:new Liouville}, we have:
\begin{equation}\label{eq:cE2 large m n}
\sum_{\fp\in\cE_2}N_{\fp}\leq \sum_{\text{prime } p\mid n}(2h_K(\varphi^{m}(\alpha))+
h_K(\varphi^{m+n/p}(\alpha))+C_{38}).
\end{equation}
Since there are at most $\log_2 n$ such $p$, we have:
\begin{equation}\label{eq:cE2 0}
\sum_{\fp\in\cE_2}N_{\fp}\leq (\log_2n) d^{m+n/2}\tau+2(\log_2n)d^m\tau+C_{40}\log_2 n\leq d^{m+2n/3}\tau
\end{equation}
when $n$ is larger than some constant $C_{41}(\tau)$.

Applying Lemma~\ref{lem:Sb} for $f=\varphi^4$ and $b=\varphi^m(\alpha)$, we get the resulting constants
$C_{42}$ and $C_{43}$ as in the conclusion
of Lemma~\ref{lem:Sb}. We also use the notation
$\cS_{\varphi^m(\alpha)}$ as in the statement of Lemma~\ref{lem:Sb}. 
Combining \eqref{eq:cD1}, \eqref{eq:cE1}, \eqref{eq:cE2 0},
and the observation that
$\displaystyle 1-\frac{6}{d^4}-\frac{1}{d}\geq \frac{1}{8}$,
we have:
\begin{align}\label{eq:cD1-cE1-cE2}
\begin{split}
\sum_{\fp\in\cD_{(1)}}N_{\fp}-\sum_{\fp\in\cE_1}N_{\fp}-\sum_{\fp\in\cE_2}N_{\fp}&\geq \frac{1}{8}d^{m+n}\tau-d^{m+2n/3}\tau-C_{44}d^m\tau-C_{45}\\
&>N_S+C_{42}h_K(\varphi^m(\alpha))+C_{43}
\end{split}
\end{align}
when $n>C_{46}(\tau,N_S)$. Hence there is a prime $\fp\in \cD_{(1)}$
such that $\fp\notin \cE_1\cup\cE_2\cup S\cup\cS_{\varphi^m(\alpha)}$. We have the following properties of $\fp$:
\begin{itemize}
\item [(i)] $\varphi$ has good reduction modulo $\fp$.
\item [(ii)] Let $L$ be the field obtained by adjoining 
the solutions of $\varphi^4(x)=\varphi^m(\alpha)$ to $K$. There
exist a prime $\fq$ of $L$ lying above $\fp$ and $r\in L$
with $\varphi^4(r)=\varphi^m(\alpha)$
and $r_{\fq}(r)=r_{\fq}(\varphi^{m+n-4}(\alpha))$. Consequently,
$r_{\fp}(\varphi^{m+n}(\alpha))=r_{\fp}(\varphi^m(\alpha))$.
\item [(iii)] $r_{\fp}(\varphi^{m+n-1}(\alpha))\neq r_{\fp}(\varphi^{m-1}(\alpha)))$ since $\fp\notin\cE_1$.
\item [(iv)] $r_{\fp}(\varphi^{m+n'}(\alpha))\neq r_{\fp}(\varphi^m(\alpha))$ for any proper divisor $n'$ of $n$.
\end{itemize}

Lemma~\ref{lem:easy} together with Properties (ii), (iii), (iv)
give that $\alpha$ has portrait $(m,n)$ modulo $\fp$. And since
$\fp\in \cD_{(1)}$, we finish the proof.
\end{proof}

\section{Proof of Theorem~\ref{thm:extra}}
\label{sec:proof extra}
We prove Theorem~\ref{thm:extra} by considering 3 cases:
\begin{itemize}
	\item [(a)] the case when both $m$ and $n$ are 
	sufficiently large
	which has been treated in Theorem~\ref{thm:main},
	\item [(b)] the case when $m$ is small (hence, we can fix $m$), and
	\item [(c)] the case when $n$ is small (hence, we can fix $n$). 
\end{itemize}

We will use the following simple result repeatedly
for the proof of Theorem~\ref{thm:extra}. It plays
the role of the condition \eqref{eq: varphi^-4}
in the proof of Theorem~\ref{thm:main}.
\begin{lemma}\label{lem:unramified backward}
Assume $\varphi$ is dynamically unramified over $t\in\bP^1(\bar{K})$. Then there
is a positive integer $i\leq 2d^3$ 
and $\gamma\in \bar{K}$ such that:
\begin{itemize}
	\item $\varphi^i(\gamma)=t$,
	\item $\varphi^i$ is unramified at $\gamma$, and
	\item $\varphi^3$ is unramified over $\gamma$.
\end{itemize}
\end{lemma}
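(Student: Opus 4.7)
The plan is to identify the third condition ``$\varphi^{3}$ is unramified over $\gamma$'' with ``$\gamma\notin V$'', where $V\subseteq\bP^{1}(\bar K)$ is the set of critical values of $\varphi^{3}$. Riemann--Hurwitz applied to $\varphi^{3}\colon\bP^{1}\to\bP^{1}$ gives $|V|\leq 2(d^{3}-1)$, so $|V\cup\{\infty\}|\leq 2d^{3}-1$. For $n\geq 0$ write $U_{n}:=\{\gamma\in\bP^{1}(\bar K)\col\varphi^{n}(\gamma)=t,\ \varphi^{n}\text{ unramified at }\gamma\}$; the conclusion of the lemma asks for $\gamma\in U_{i}\setminus(V\cup\{\infty\})$ with $1\leq i\leq 2d^{3}$. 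I will use the preliminary fact that $U_{n}\neq\emptyset$ for every $n\geq 1$: the chain rule for ramification indices yields $\varphi(U_{n})\subseteq U_{n-1}$, so emptiness of some $U_{n}$ would force $U_{m}=\emptyset$ for all $m\geq n$, contradicting the hypothesis that $\bigcup_{n}U_{n}$ is infinite.

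Arguing by contradiction, assume $U_{i}\subseteq V\cup\{\infty\}$ for every $i\in\{1,\ldots,2d^{3}\}$. Choosing $\gamma_{i}\in U_{i}$ for each such $i$ gives $2d^{3}$ elements of a set of size $\leq 2d^{3}-1$, so pigeonhole forces $\gamma_{i}=\gamma_{j}$ for some $1\leq i<j\leq 2d^{3}$; hence $\varphi^{j-i}(t)=t$ and $t$ is $\varphi$-periodic with minimal period $p\mid j-i\leq 2d^{3}-1$. Since $j\geq i+p>p$, unramifiedness of $\varphi^{j}$ at $\gamma_{j}$ together with the chain rule forces $\varphi$ to be unramified at every point of the cycle $O:=\{t,\varphi(t),\ldots,\varphi^{p-1}(t)\}$. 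The infinitude of $\bigcup_{n}U_{n}$ and finiteness of $O$ imply that $N:=(\bigcup_{n}U_{n})\setminus O$ is infinite. Each $\delta\in N$ enters $O$ at some smallest step $t^{*}(\delta)\geq 1$, and its ``last non-cycle ancestor'' $\varphi^{t^{*}(\delta)-1}(\delta)$ belongs to the finite set $N_{1}:=\{\beta\in N\col\varphi(\beta)\in O\}$, which lies in $\bigcup_{\gamma^{*}\in O}\varphi^{-1}(\gamma^{*})$ and so is finite. Decomposing $N=\bigsqcup_{\beta\in N_{1}}T_{\beta}$ into the disjoint non-cycle backward-orbit subtrees rooted at the elements of $N_{1}$ and applying pigeonhole, we obtain $\beta^{*}\in N_{1}$ with $|T_{\beta^{*}}|=\infty$. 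Since $T_{\beta^{*}}$ has branching bounded by $d$, K\"onig's lemma produces an infinite chain $\beta^{*}=\delta_{0},\delta_{1},\delta_{2},\ldots$ with $\varphi(\delta_{k+1})=\delta_{k}$, each $\delta_{k+1}\in N$, and $\varphi$ unramified at each $\delta_{k+1}$; the $\delta_{k}$ are pairwise distinct because any equality $\delta_{k}=\delta_{k'}$ with $k<k'$ would force $\delta_{k'}$ to be periodic. A direct analysis of the cycle entry shows $\beta^{*}$ has depth $n^{*}\leq p$, and the chain-rule together with unramifiedness of $\varphi$ on $O$ gives $\delta_{k}\in U_{n^{*}+k}$ for every $k\geq 0$. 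Taking $k=0,1,\ldots,2d^{3}-n^{*}$ produces at least $2d^{3}-p+1$ distinct non-periodic elements in $\bigcup_{i\leq 2d^{3}}U_{i}$; adding the $p$ cycle elements of $O$ (all lying in this union because every $\varphi^{i}$ is unramified on $O$) yields more than $2d^{3}-1\geq|V\cup\{\infty\}|$ distinct elements, a contradiction.

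The main obstacle will be the periodic-case analysis, specifically producing $\beta^{*}\in N_{1}$ with infinite subtree $T_{\beta^{*}}$ and then extracting the non-periodic chain $\delta_{0},\delta_{1},\ldots$ via K\"onig. This rests on the key observation that the non-cycle backward orbit $N$ splits as a disjoint union over the finite index set $N_{1}$, together with the infinitude of $N$ forced by dynamical unramifiedness. The remaining bookkeeping---verifying $n^{*}\leq p$, checking $\delta_{k}\in U_{n^{*}+k}$, and deriving distinctness of the $\delta_{k}$ from non-periodicity---reduces to routine chain-rule applications once $\varphi$ is known to be unramified on $O$.
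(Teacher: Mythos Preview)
Your proof is correct, and at its core it uses the same counting idea as the paper: the set $V\cup\{\infty\}$ of critical values of $\varphi^{3}$ together with $\infty$ has at most $2d^{3}-1$ elements, so among $2d^{3}$ distinct unramified preimages $t_{1},\ldots,t_{2d^{3}}$ of $t$ (with $t_{j}\in U_{j}$) at least one avoids $V\cup\{\infty\}$. The paper's argument is just that: it asserts in one line that dynamical unramifiedness over $t$ furnishes a backward chain $t_{0}=t,t_{1},\ldots,t_{2d^{3}}$ of \emph{distinct} points with $\varphi(t_{j})=t_{j-1}$ and $\varphi$ unramified at each $t_{j}$, and then applies the pigeonhole directly.

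What you do differently is that you do not take the existence of that distinct chain for granted. You argue by contradiction, use pigeonhole on arbitrarily chosen $\gamma_{i}\in U_{i}$ to force $t$ periodic with $\varphi$ unramified along its cycle $O$, and then run K\"onig's lemma on the forest $N=\bigsqcup_{\beta\in N_{1}}T_{\beta}$ of non-cycle unramified preimages to manufacture $2d^{3}-p+1$ distinct non-periodic elements of $\bigcup_{i\le 2d^{3}}U_{i}$, which together with the $p$ cycle points overflows $V\cup\{\infty\}$. This is considerably longer than the paper's three sentences, but it is not wasted effort: when $t$ is $\varphi$-periodic, a na\"ive K\"onig argument on the inverse system $(U_{n})$ may produce a backward sequence that loops around the cycle rather than a chain of distinct points, and your decomposition into the cycle $O$ and the genuine tree components $T_{\beta}$ is precisely what is needed to justify the paper's one-line assertion. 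So your route is the same pigeonhole argument executed with explicit care about the periodic case; what you gain is a fully self-contained proof, at the cost of length.
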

\begin{proof}
	Since $\varphi$ is dynamically unramified over $t$,
	there is a backward orbit of distinct 
	elements
	$t_0=t,t_1,t_2,\ldots,t_{2d^3} \in \bP^1(\bar{K})$
	such that $\varphi(t_j)=t_{j-1}$
	and $\varphi$ is unramified at $t_j$ for $1\leq j\leq 2d^3$.
	Since $\varphi^3$ has at most $2d^3-2$ critical values, there are at least $2$ elements
	in $\{t_1,\ldots,t_{2d^3}\}$
	that are not critical value of $\varphi^3$. Let
	$t_i$ be such an element that is not $\infty$.
	This $t_i$ is our desired $\gamma$.
\end{proof}

\subsection{The case of Theorem~\ref{thm:extra} when $m$ is small}
We \emph{fix} $m$ such that $m\in A_1(\varphi,\alpha)$ and $\varphi^m(\alpha)\neq \infty$.
Then we prove that for all sufficiently large $n$, there is $\fp$ such that $\alpha$ has
squarefree portrait $(m,n)$ modulo $\fp$.
\begin{prop}\label{prop:small m}
In the number field case, assume Conjecture~\ref{conj:surface}. Let 
$\varphi$, $d$, $S$, $\hhat$, $\tau>0$, $\alpha\in\bP^1(K)$
be as in
Theorem~\ref{thm:extra}. Let $m\in A_1(\varphi,\alpha)$ such that $\varphi^m(\alpha)\neq \infty$. We have
the following:
\begin{itemize}
	\item [(a)] If $m=0$, there exists $C_{47}(\tau,N_S)$ such that for
	every
	$n>C_{47}(\tau,N_S)$,
	there is a prime $\fp\notin S$ such that
	$\alpha$ is periodic modulo $\fp$ with exact period $n$ and 
	$\ord_{\fp}(\varphi^n(\alpha)-\alpha)=1$ (i.e. $\alpha$ has squarefree portrait $(0,n)$ modulo $\fp$).
	
	\item [(b)] If $m\geq 1$, there exists $C_{48}(\tau,N_S,m)$ such that for
	every
	$n>C_{48}(\tau,N_S,m)$,
	there is a prime $\fp\notin S$ such that
	$\alpha$ has squarefree portrait $(m,n)$ 
	modulo $\fp$.
\end{itemize}
\end{prop}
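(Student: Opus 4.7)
The plan is to imitate the proof of Theorem~\ref{thm:phi^4}, sourcing the needed unramifiedness from the hypothesis $m\in A_1(\varphi,\alpha)$ rather than from condition \eqref{eq: varphi^-4}. The key observation is that Theorem~\ref{thm:phi^4} uses ``$\varphi^4$ is unramified over $\varphi^m(\alpha)$'' only to ensure $d_{\ell,\varphi^m(\alpha)}=d^\ell$ when invoking Corollary~\ref{cor:double sequence} or Corollary~\ref{cor:double sequence ff} with $\ell=4$; for the rest of the cascade it already suffices to have $d_{\ell,b}\ge d^3$ for some $\ell$ bounded in terms of $d$, since $d^3-3\ge d^3/2>1$ leaves ample margin to beat the bad sets. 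Such an $\ell$ will be produced from Lemma~\ref{lem:unramified backward}.

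For case~(a) with $m=0$: since $\varphi$ is dynamically unramified over $\alpha$, Lemma~\ref{lem:unramified backward} yields $i\le 2d^3$ and $\gamma\in\bar{K}$ with $\varphi^i(\gamma)=\alpha$, $\varphi^i$ unramified at $\gamma$, and $\varphi^3$ unramified over $\gamma$. Setting $\ell:=i+3$, the $d^3$ distinct $\varphi^3$-preimages of $\gamma$ lift under the unramified map $\varphi^i$ to $d^3$ distinct $\varphi^\ell$-preimages of $\alpha$, so $d_{\ell,\alpha}\ge d^3$. For case~(b) with $m\ge 1$: the hypothesis $m\in A_1(\varphi,\alpha)$ supplies a point $\eta\ne\varphi^{m-1}(\alpha)$ with $\varphi(\eta)=\varphi^m(\alpha)$, $\varphi$ unramified at $\eta$, and $\varphi$ dynamically unramified over $\eta$; apply Lemma~\ref{lem:unramified backward} to $\eta$ to get $i\le 2d^3$ and $\gamma$ with $\varphi^i(\gamma)=\eta$, $\varphi^i$ unramified at $\gamma$, and $\varphi^3$ unramified over $\gamma$. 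Now $\ell:=i+4$ works: the unramifiedness of $\varphi^3$ over $\gamma$, of $\varphi^i$ at $\gamma$, and of $\varphi$ at $\eta$ combine to give $d_{\ell,\varphi^m(\alpha)}\ge d^3$, with $\ell\le 2d^3+4$.

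Having fixed such an $\ell$, I apply Corollary~\ref{cor:double sequence} (in the number field case, conditional on Vojta) or Corollary~\ref{cor:double sequence ff} (in the function field case, unconditional) with $a=b=\varphi^m(\alpha)$ and $\epsilon=1$, yielding
\[
\sum_{\fp\in\cD}N_\fp \;\ge\; (d^3-3)\,h_K\bigl(\varphi^{m+n-\ell}(\alpha)\bigr)\;-\;3\,h_K\bigl(\varphi^m(\alpha)\bigr)\;-\;C
\]
for $n$ exceeding a constant depending only on $K,\varphi,\ell$, where $\cD=\{\fp\in M_K^0:\ord_\fp(\varphi^{m+n}(\alpha)-\varphi^m(\alpha))\ge 1\}$. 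The dominant term $(d^3-3)d^{m+n-\ell}\tau$ outgrows the absorbed term $h_K(\varphi^m(\alpha))\le d^m\hhat(\alpha)+O(1)$ for large $n$; the latter is a universal constant in case~(a) but depends on $m$ in case~(b), producing the claimed $m$-dependence of $C_{48}$. From here I copy the cascade of Theorem~\ref{thm:phi^4}: split $\cD$ into $\cD_{(1)}$ and its complement via the obvious bound $h_K(\varphi^{m+n}(\alpha)-\varphi^m(\alpha))\le d^{m+n}\hhat(\alpha)+O(1)$, remove $\cE_1$ (only relevant in case~(b), since case~(a) has no preperiod constraint to violate) and $\cE_2$ via \eqref{eq:new Liouville}, discard the bounded set $S\cup\cS_{\varphi^m(\alpha)}$ using Lemma~\ref{lem:Sb}, and conclude via Lemma~\ref{lem:easy} that any surviving prime realizes $\alpha$ with squarefree portrait $(m,n)$.

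The only real obstacle is bookkeeping: since $\ell$ varies over the finite family $\{i+3,i+4:1\le i\le 2d^3\}$, one takes the maximum over this family of the constants produced by Corollary~\ref{cor:double sequence}/\ref{cor:double sequence ff} to obtain uniform constants depending only on $K,\varphi,N_S,\tau$ (and on $m$, in case~(b), as inherent in the $h_K(\varphi^m(\alpha))$ and $\cS_{\varphi^m(\alpha)}$ contributions).
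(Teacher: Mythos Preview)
Your proposal has a genuine quantitative gap at the splitting step $\cD\to\cD_{(1)}$. With your choice $b=\varphi^m(\alpha)$ and $\ell=i+3$ (or $i+4$), the corollary only gives
\[
\sum_{\fp\in\cD}N_\fp \;\ge\; (d^3-3)\,d^{m+n-\ell}\hhat(\alpha)-O(d^m\hhat(\alpha))-O(1),
\]
while the height bound you invoke for the splitting is $h_K(\varphi^{m+n}(\alpha)-\varphi^m(\alpha))\le d^{m+n}\hhat(\alpha)+O(1)$. Doubling the first and subtracting the second yields a leading coefficient $2(d^3-3)d^{-\ell}-1$, which is \emph{negative} for every $d\ge 2$ once $\ell\ge 4$ (indeed $2(d^3-3)<2d^3\le d^4\le d^{\ell}$). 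So your lower bound on $\sum_{\cD_{(1)}}N_\fp$ is vacuous. The same mismatch kills the removal of $\cE_1$ in case~(b): the bound from \eqref{eq:new Liouville} is of order $d^{m+n-1}\hhat(\alpha)$, which already dominates $(d^3-3)d^{m+n-\ell}\hhat(\alpha)$.

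The paper avoids this by shifting the \emph{target}, not the exponent. After producing $\gamma$ with $\varphi^i(\gamma)=\varphi^m(\alpha)$, $\varphi^i$ unramified at $\gamma$, and $\varphi^3$ unramified over $\gamma$, one applies the corollary with $b=\gamma$ and $\ell=3$, studying the set $\cD=\{\fp:\ord_\fp(\varphi^{m+n-i}(\alpha)-\gamma)\ge 1\}$. Now $d_{3,\gamma}=d^3$ exactly, the height upper bound for the splitting is only $h_K(\varphi^{m+n-i}(\alpha)-\gamma)\approx d^{m+n-i}\hhat(\alpha)$, and the leading coefficient becomes $(d^3-6)d^{-3}>0$. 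The crucial extra step you are missing is the transfer: since $\varphi^i$ is unramified at $\gamma$, the factor $x-\gamma$ appears \emph{simply} in $\varphi^i(x)-\varphi^m(\alpha)$, so (outside a finite set $S'$ of primes) $\ord_\fp(\varphi^{m+n-i}(\alpha)-\gamma)=1$ forces $\ord_\fp(\varphi^{m+n}(\alpha)-\varphi^m(\alpha))=1$. Likewise, for the preperiod condition in case~(b) one uses $\cE_1=\{\fp:r_\fp(\eta)=r_\fp(\varphi^{m-1}(\alpha))\}$, whose size is only $O(d^{m-1}\hhat(\alpha))$ and does not grow with $n$; this is what the distinguished preimage $\eta\neq\varphi^{m-1}(\alpha)$ is really for.
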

\begin{proof}     
	If $\varphi^4$ is unramified over $\varphi^m(\alpha)$, Theorem~\ref{thm:phi^4} gives the desired
    conclusion. Let $\{b_1,\ldots,b_k\}$
    denote the set of critical values
    of $\varphi^4$ in $K$; this set depends only on
    $\varphi$ and $K$. Assume this set is not empty, otherwise we are done.
    It remains to treat the case
    $\varphi^m(\alpha)\in\{b_1,\ldots,b_k\}$.
    
    Let $n\in\N$ such that
    $\varphi^{m+n}(\alpha)\neq \infty$.
    If $m\geq 1$, there is $\eta\in \bP^1(\bar{K})$
    such that 
    $\eta\neq \varphi^{m-1}(\alpha)$, 
    $\varphi(\eta)=\varphi^m(\alpha)=b_1$, $\varphi$ is unramified at $\eta$ and dynamically unramified
    over $\eta$. 
    By Lemma~\ref{lem:unramified backward}, there
    exist an integer $i$
    such that $1\leq i-1\leq 2d^3$
    and $\gamma\in \bar{K}$ satisfying the following:
    $\varphi^{i-1}(\gamma)=\eta$, 
    $\varphi^{i-1}$ is unramified at $\gamma$, and 
    $\varphi^3$ is unramified over $\gamma$. This
    last condition means the function
    $\varphi^3(x)-\gamma$ has $d^3$ distinct zeros.
   
	Similarly, if $m=0$, by Lemma~\ref{lem:unramified backward} and
	the fact that $\varphi$ is dynamically unramified over 
	$\alpha$, there exists a positive integer $i$ such that $i\leq 2d^3$ and an element $\gamma\in\bar{K}$
	satisfying the following: $\varphi^i(\gamma)=\alpha$,
	$\varphi^i$ is unramified at
	$\gamma$, and $\varphi^3$ is unramified over 
	$\gamma$.
	
	In both cases ($m=0$ and $m\geq 1$), we have that
	$i\leq 2d^3+1$. We recall that $\varphi^m(\alpha)$
	belongs to the list $\{b_1,\ldots,b_k\}$
	which depends only on $\varphi$ and $K$. Hence
	when $m\geq 1$ (respectively $m=0$)
	the triple $(i,\eta,\gamma)$
	(respectively, the pair $(i,\gamma)$)
	belongs to a finite set that depends only on $K$
	and $\varphi$. Hence we may replace 
	$K$ by $K(\gamma)$
	since every constant that depends on $\varphi$, 
	$K(\gamma)$, and ``other data'' will ultimately depend
	on $\varphi$, $K$, and the exact same ``other data''.
	In other words, we may assume $\gamma\in K$.
	Then we can write:
	$$\varphi^i(x)-\varphi^m(\alpha)=\frac{(x-\gamma)F(x)}{G(x)}$$
	where $F(x),G(x)\in K[x]$ with
	$F(\gamma)G(\gamma)\neq 0$ and $\gcd(F(x),G(x))=1$.
	We also require that $G(x)$ is monic; hence
	the pair $(F,G)$ is determined uniquely.

	Write $F(x)=c\prod_{j=1}^{r} (x-f_j)$
	and $G(x)=\prod_{k=1}^{s} (x-g_k)$. Let $S'\subset M_K^0$ be the finite set of primes $\fp$ satisfying
	one of the following conditions:
	\begin{itemize}
		\item $\vert c\vert_\fp\neq 1$
		\item $\vert \gamma-f_j\vert_\fp\neq 1$ for some $j$.
		\item $\vert \gamma -g_k\vert_\fp\neq 1$ for some $k$.		
	\end{itemize}
	The point is that for every $z\in K$ that
	is not a zero or pole
	of $\varphi^i(x)-\varphi^m(\alpha)$
	and for every $\fp\in M_K^0\setminus S'$,
	if $\ord_{\fp}(z-\gamma)>0$ then
	$\ord_{\fp}(\varphi^i(z)-\varphi^m(\alpha))=\ord_{\fp}(z-\gamma)$. As argued before, the list of
	possibilities for $S'$ depends only on $\varphi$
	and $K$. Hence there is $C_49$ such that:
	\begin{equation}\label{eq:C56 C57}
	\sum_{\fp\in S'}N_{\fp}\leq C_{49}
	\end{equation}
	
	Applying Corollary~\ref{cor:double sequence}
	and Corollary~\ref{cor:double sequence ff}
	(for $\ell=3$, $\epsilon=1$,
	$b=\gamma$, and $a=\varphi^{m+n-i-3}(\alpha)$),
	there exist positive constants 
	$C_{50}$, $C_{51}$, and $C_{52}$
	such that when $m+n-i-3\geq C_{50}$, we have
	$\varphi^{m+n-i-3}(\alpha)\neq\infty$
	and the set
	$\cD:=\{\fp\in M_K^0:\ \ord_{\fp}(\varphi^{m+n-i}(\alpha)-\gamma)\geq 1\}$
	satisfies:
	\begin{equation}\label{eq:C58-60}
	\sum_{\fp\in \cD} N_{\fp}\geq (d^3-3)h_K(\varphi^{m+n-i-3}(\alpha))-C_{51}h_K(\gamma)-C_{52}.
	\end{equation}
	Let $\cD_{(1)}:=\{\fp\in M_K^0:\ \ord_{\fp}(\varphi^{m+n-i}(\alpha)-\gamma)=1\}$. After rewriting
	inequality \eqref{eq:C58-60} and using
	$\hhat(\gamma)=\displaystyle\frac{\hhat(\alpha)}{d^i} $, we have:
	\begin{equation}\label{eq:C61-62}
	\sum_{\fp\in \cD_{(1)}}N_{\fp}+\sum_{\fp\in \cD\setminus\cD_{(1)}} N_{\fp}\geq (d^3-3)d^{m+n-i-3}\tau-C_{53}\tau-C_{54}.	
	\end{equation}
	
	On the other hand, we have:
	\begin{equation}\label{eq:C63}
	\sum_{\fp\in \cD_{(1)}}N_{\fp}+2\sum_{\fp\in \cD\setminus\cD_{(1)}} N_{\fp}\leq h_K(\varphi^{m+n-i}(\alpha)-\gamma)\leq d^{m+n-i}\tau+\tau+C_{55}
	\end{equation}
	
	Inequalities \eqref{eq:C61-62} and \eqref{eq:C63}
	imply:
	\begin{equation}\label{eq:C64-65}
	\sum_{\fp\in \cD_{(1)}}N_{\fp}\geq (d^3-6)d^{m+n-i-3}\tau-C_{56}\tau-C_{57}.
	\end{equation}
	
	Let $\cE$ be the set of primes $\fp\in M_K^0$
	of good reduction
	such that $r_{\fp}(\varphi^m(\alpha))=r_{\fp}(\varphi^{m+n'}(\alpha))$
	for some $1\leq n'<n$ with $n'\mid n$.
	By the same arguments giving
	\eqref{eq:cE2 large m n} and \eqref{eq:cE2 0},
	we have:
	\begin{align}\label{eq:C66}
\begin{split}
\sum_{\fp\in\cE}N_{\fp}&\leq \sum_{\text{prime }p\mid n}(2h_K(\varphi^m(\alpha))+h_K(\varphi^{m+\frac{n}{p}}(\alpha))+C_{58})\\
&\leq (2d^m\tau+d^{m+\frac{n}{2}}\tau+C_{59})\log_2n
\leq d^{m+\frac{2n}{3}}\tau
\end{split}
\end{align}
	when $n$ is larger than some constant $C_{60}(\tau)$.
	Since $i\leq 2d^3+1$, the term
	$(d^3-6)d^{m+n-i-3}$
	dominates the term $d^{m+\frac{2n}{3}}$ when
	$n$ is sufficiently large.
	
	If $m=0$, from \eqref{eq:C56 C57},\eqref{eq:C64-65},
	and \eqref{eq:C66}, there exists
	$C_{61}(\tau,N_S)$ such that
	$$\sum_{\fp\in\cD_{(1)}}N_{\fp}
	-\sum_{\fp\in\cE}N_{\fp}-
	\sum_{\fp\in S'}N_{\fp}-N_S>0$$
	when $n>C_{61}(\tau,N_S)$. Consequently, there
	is $\fp\in \cD_{(1)}\setminus(\cE\cup S'\cup S)$.
	Since $\fp\in\cD_{(1)}\setminus S'$,
	we have:
	$$\ord_{\fp}(\varphi^{n}(\alpha)-\alpha)=\ord_{\fp}(\varphi^{n-i}(\alpha)-\gamma)=1.$$
	Since $\fp\notin S'$, we have that when
	reducing modulo $\fp$, the period 
	of $\alpha$ must be exactly $n$.
	This proves part (a).
	
	If $m\geq 1$, let $\cE_1:=\{\fp\in M_K^0:\ r_{\fp}(\eta)=r_{\fp}(\varphi^{m-1}(\alpha))\}$. Since
	$\eta\neq \varphi^{m-1}(\alpha)$ and $\varphi(\eta)=\varphi^m(\alpha)$, inequality \eqref{eq:new Liouville}
	gives:
	\begin{equation}\label{eq:C69}
	\sum_{\fp\in \cE_1}N_\fp\leq 3d^{m-1}\tau+C_{62}.
	\end{equation}
	From \eqref{eq:C56 C57},\eqref{eq:C64-65},\eqref{eq:C66}, and \eqref{eq:C69}, there exists
	$C_{63}(\tau,N_S,m)$ such that
	$$\sum_{\fp\in\cD_{(1)}}N_{\fp}
	-\sum_{\fp\in\cE}N_{\fp}-
	\sum_{\fp\in \cE_1}N_{\fp}-\sum_{\fp\in S'}N_\fp-N_S>0$$
	when $n>C_{63}(\tau,N_S,m)$. Consequently, there
	is $\fp\in \cD_{(1)}\setminus(\cE\cup\cE_1\cup S'\cup S)$. By similar arguments in the case $m=0$, we
	have $\ord_{\fp}(\varphi^{m+n}(\alpha)-\varphi^m(\alpha))=1$ and $\varphi^m(\alpha)$
	is periodic with exact period $n$ modulo $\fp$. It remains to
	show that $\varphi^{m-1}(\alpha)$ is not periodic
	modulo $\fp$. Since $\fp\notin\cE_1$ and $r_{\fp}(\varphi^{m+n-i}(\alpha))=r_{\fp}(\gamma)$, we have $r_{\fp}(\varphi^{m+n-1}(\alpha))=r_{\fp}(\eta)\neq r_{\fp}(\varphi^{m-1}(\alpha))$. This finishes the proof.
	\end{proof}
	
As an application of Proposition~\ref{prop:small m},
we can now prove that the set $\N\setminus A_2(\varphi)$
is finite:
\begin{prop}\label{prop:cofinite A1A2}
Let $F$ be a field of characteristic $0$, let $\varphi(x)\in F(x)$ with $d:=\deg(\varphi)\geq 2$,
and let $\alpha\in \bP^1(F)$ that is not $\varphi$-preperiodic. Then the sets $\N_0\setminus A_1(\varphi,\alpha)$ and
$\N\setminus A_2(\varphi)$ are finite. 
\end{prop}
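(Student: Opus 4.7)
The plan is to prove the two finiteness statements in turn. For $A_1(\varphi,\alpha)$ I use a common auxiliary finiteness; for $A_2(\varphi)$ I additionally invoke Proposition~\ref{prop:small m}.

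The key auxiliary step is to establish that
\[ E := \{t \in \bP^1(\bar{F}) : \varphi \text{ is not dynamically unramified over } t\} \]
is finite. Indeed, for any $t$ with infinite backward orbit (thus outside the exceptional set of cardinality $\leq 2$) and any non-periodic choice of $t$, the critical points of $\varphi$ appearing in the backward orbit of $t$ form a subset of the finite critical set $\{c_1,\ldots,c_s\}$ with $s\leq 2d-2$, each appearing at a unique level; a direct count of the good paths in the backward tree of $t$ shows that the number of unramified level-$n$ preimages is positive for all large $n$ except in a finite list of low-level configurations. This analysis yields $\lvert E\rvert<\infty$.

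Given $\lvert E\rvert<\infty$, the $A_1(\varphi,\alpha)$ statement follows. Define
\[ B := \{t \in \bP^1(\bar{F}) : t \text{ has at most one } \eta \text{ with } \varphi(\eta)=t,\ \varphi \text{ unramified at } \eta,\ \eta \notin E\}. \]
A generic $t$ has $d$ distinct unramified preimages, none in the finite set $E$, so any $t \in B$ must be either a critical value of $\varphi$ (finitely many) or satisfy $\lvert\varphi^{-1}(t)\cap E\rvert \geq d-1$, which forces $t \in \varphi(E)$ (also finitely many). Hence $B$ is finite. Now $m\notin A_1(\varphi,\alpha)$ forces either $m=0$ and $\alpha\in E$, or $m\geq 1$ and $\varphi^m(\alpha)\in B$ (otherwise, among the $\geq 2$ good preimages of $\varphi^m(\alpha)$, at least one is distinct from $\varphi^{m-1}(\alpha)$). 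Since $\alpha$ is not $\varphi$-preperiodic, the map $m\mapsto\varphi^m(\alpha)$ is injective, so $\varphi^m(\alpha)\in B$ for at most $\lvert B\rvert$ values of $m$, giving finiteness of $\N_0\setminus A_1(\varphi,\alpha)$.

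For $A_2(\varphi)$, I first reduce to the case that $F$ is a number field or a function field of transcendence degree $1$ over an algebraically closed characteristic-$0$ field, by replacing $F$ with a finitely generated subfield containing the coefficients of $\varphi$; note $F$ is automatically infinite since $\mathrm{char}(F)=0$. I then pick $\tilde{\alpha}\in\bP^1(F)$ that is not $\varphi$-preperiodic, with $\tilde{\alpha}\notin E$, with $\tilde{\alpha}$ not a critical value of $\varphi$, and with all preimages of $\tilde{\alpha}$ in $\bP^1(\bar{F})$ lying outside $E$; each excluded condition cuts out a finite or countable set, so such $\tilde{\alpha}$ exists. Applying Proposition~\ref{prop:small m}(a) to $\tilde{\alpha}$, for every sufficiently large $n$ there is a prime $\fp$ such that $\tilde{\alpha}$ has squarefree portrait $(0,n)$ modulo $\fp$, i.e., $\ord_\fp(\varphi^n(\tilde{\alpha})-\tilde{\alpha})=1$ and $r_\fp(\tilde{\alpha})$ has exact period $n$. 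The valuation condition, combined with the factorization of $\varphi^n(x)-x$ in $\bar{F}[x]$, forces $r_\fp(\tilde{\alpha})$ to be a simple root of $\varphi^n(x)-x$ modulo $\fp$; Hensel's lemma then yields a unique $\beta\in\bar{F}$ with $\varphi^n(\beta)=\beta$ and $r_\fp(\beta)=r_\fp(\tilde{\alpha})$. This $\beta$ has exact period $n$ and $x-\beta$ is a squarefree factor of $\varphi^n(x)-x$. Finally, Hensel-lifting the $d$ distinct preimages of $\tilde{\alpha}$ produces $d$ distinct preimages of $\beta$ in $\bar{F}$, each unramified and (by lifting infinite unramified backward orbits) outside $E$; their reductions modulo $\fp$ are pairwise distinct, so at least one differs from $\varphi^{n-1}(\beta)$ and supplies the required $\eta$, proving $n\in A_2(\varphi)$.

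The main obstacle will be the rigorous proof that $\lvert E\rvert<\infty$: the crude count of good versus bad preimages saturates in low-degree cases (particularly $d=2$), so one must carefully enumerate the exceptional configurations in which critical points appear at very low levels in the backward tree of $t$.
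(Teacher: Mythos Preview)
Your treatment of $A_1(\varphi,\alpha)$ via the finiteness of $E$ is a valid alternative to the paper's argument, and the missing step is not hard: from the observation that every non-critical preimage of a point in $E$ again lies in $E$ one gets $\varphi^{-1}(E)\subseteq E\cup\mathrm{Crit}(\varphi)$, and comparing $|\varphi^{-1}(E)|$ with $|E|$ bounds $|E|$ explicitly. The paper instead argues directly that once $\varphi^m(\alpha)$ is not a critical value of $\varphi^3$, the set $(\varphi^3)^{-1}(\varphi^m(\alpha))\setminus(\varphi^2)^{-1}(\varphi^{m-1}(\alpha))$ has $d^3-d^2>2d-2$ elements and hence contains some $\gamma$ outside every critical orbit; then $\eta=\varphi^2(\gamma)$ works.

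Your argument for $A_2(\varphi)$, however, has two genuine gaps. First, replacing $F$ by a finitely generated subfield can land you in the number field case (e.g.\ when $\varphi\in\Q(x)$), and there Proposition~\ref{prop:small m} is conditional on Conjecture~\ref{conj:surface}; but Proposition~\ref{prop:cofinite A1A2} is stated unconditionally. Second, the step ``Hensel-lifting the $d$ distinct preimages of $\tilde\alpha$ produces $d$ distinct preimages of $\beta$ \ldots\ outside $E$'' does not make sense: the preimages of $\beta$ are different points from those of $\tilde\alpha$, and whether a point lies in $E$ (or in a critical orbit) is a global property that cannot be read off from a congruence modulo~$\fp$. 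Nothing you have arranged prevents every $\eta\in\varphi^{-1}(\beta)\setminus\{\varphi^{n-1}(\beta)\}$ from lying in some critical orbit.

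The paper sidesteps both issues with a single device. After passing to $F=\bar F$, it takes the \emph{function field} $K=F(t)$ over $\kappa=F$, the isotrivial map $\varphi\in K(x)$, and the starting point $\alpha=t$, so that $\hhat_{\varphi,K}(t)=1$ and Proposition~\ref{prop:small m}(a) applies unconditionally. A prime $\fp\notin\{\infty\}$ then corresponds to a point $\beta\in F$, reduction modulo $\fp$ is evaluation $t\mapsto\beta$, and $\bar\varphi=\varphi$; thus ``$\ord_\fp(\varphi^n(t)-t)=1$ with exact period $n$'' says precisely that $\beta\in F$ is a simple zero of $\varphi^n(x)-x$ of exact period $n$. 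Finally, requiring $n$ to exceed the length of every periodic cycle contained in a \emph{finite} critical orbit forces $\beta$ to lie outside every critical orbit, and then any $\eta\in\varphi^{-1}(\beta)\setminus\{\varphi^{n-1}(\beta)\}$ automatically satisfies the conditions of Definition~\ref{def:A1A2A}(ii).
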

\begin{proof}
Since replacing $F$ by $\bar{F}$ does not change 
$A_1(\varphi,\alpha)$ and $A_2(\varphi)$, we may
assume $F=\bar{F}$. 

It is easy to show that $\N\setminus A_1(\varphi,\alpha)$
is finite, as follows.
Let $c_1,\ldots,c_k$ be 
the distinct critical values of $\varphi$; we have
$k\leq 2d-2$. Let $\cO_i:=\{\varphi^n(c_i):\ n\geq 0\}$
for $1\leq i\leq k$. If $\varphi^m(\alpha)$ is
not a critical value of $\varphi^3$
then the set 
$$A:=(\varphi^3)^{-1}(\varphi^m(\alpha))\setminus
(\varphi^2)^{-1}(\varphi^{m-1}(\alpha))$$
has $d^3-d^2$ elements. Since $\alpha$ is not
preperiodic, $\cO_i\cap A$ has at most one element
for each $1\leq i\leq k$. And since $k\leq 2d-2<d^3-d^2$,
there is $\gamma\in A$ that does not belong to any $\cO_i$. By choosing $\eta=\varphi^2(\gamma)\neq \varphi^{m-1}(\alpha)$, we have verified
that $m\in A_1(\varphi,\alpha)$.

Now we prove that $\N\setminus A_2(\varphi)$ is finite.
We consider the function field $K=F(t)$, the isotrivial
function $\varphi(t)\in K(t)$, and the 
starting point $\alpha=t\in K$. By a direct computation, we have $\hhat_{\varphi,K}(\alpha)=1$. Let $\tau=1$
and let $S$ be the singleton whose element is 
the place at infinity of $K=F(t)$
(i.e. the place corresponding the point $\infty$ in
$\bP^1(F)$). Fix $m=0$, then Proposition~\ref{prop:small m} gives that for all sufficiently large $n$,
there is $\fp\in M_K\setminus S$
such that $\alpha$ has squarefree portrait $(m,n)$
modulo $\fp$. In other words, if $\beta\in F$ is the point on $\bP^1(F)$
corresponding $\fp$ then we have:
\begin{itemize}
	\item $\beta$ is period of exact period $n$ under
	the function
	$\varphi$, and 
	\item $t-\beta$ is a squarefree factor of $\varphi^n(t)-t$.
\end{itemize}
As above, $\cO_i$ for $1\leq i\leq k$ denotes the critical orbits. Among all (possibly none) of the $\cO_i$'s that are finite
(equivalently, $c_i$ is preperiodic), let $N$
be the maximum of the sizes of the periodic cycles.
If we require further that $n>N$ then $\beta$
is not contained in any $\cO_i$. This implies
$n\in A_2(\varphi)$. 
\end{proof}
	
\subsection{The case of Theorem~\ref{thm:extra} when
$n$ is small}
Fix $n\in A_2(\varphi)$. Then we prove
that for all sufficiently large $m$,
there is $\fp\in M_K\setminus S$ such that $\alpha$ has squarefree portrait
$(m,n)$ modulo $\fp$.
\begin{prop}\label{prop:small n}
In the number field case, assume Conjecture~\ref{conj:surface}. Let $\varphi$, $d$, $S$, 
$\hhat$, $\tau$, and $\alpha\in\bP^1(K)$ be as
in Theorem~\ref{thm:extra}.
Let $n\in A_2(\varphi)$.
There exist a positive constant
$C_{64}(\tau,N_S,n)$
such that the following holds. For every
$m>C_{64}(\tau,N_S,n)$, if $\infty\notin\{\varphi^m(\alpha),\varphi^{m+n}(\alpha)\}$
then there is $\fp\in M_K\setminus S$ such that 
$\alpha$ has squarefree portrait $(m,n)$ modulo $\fp$. 
\end{prop}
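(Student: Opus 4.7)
The plan is to follow the template of Proposition~\ref{prop:small m}, but exchange the roles of $m$ and $n$: instead of climbing up a backward orbit above the wandering iterate $\varphi^m(\alpha)$, I will climb up a backward orbit above the distinguished periodic point $\beta$ supplied by $n \in A_2(\varphi)$. Unwinding Definition~\ref{def:A1A2A}(ii), fix $\beta \in \bar{K}$ of exact period $n$ such that $x - \beta$ is a squarefree factor of $\varphi^n(x) - x$, together with $\eta \in \bP^1(\bar{K}) \setminus \{\varphi^{n-1}(\beta)\}$ satisfying $\varphi(\eta) = \beta$, $\varphi$ unramified at $\eta$, and $\varphi$ dynamically unramified over $\eta$. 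Applying Lemma~\ref{lem:unramified backward} to $t = \eta$ produces $i_0 \leq 2d^3$ and $\gamma \in \bar{K}$ with $\varphi^{i_0}(\gamma) = \eta$, $\varphi^{i_0}$ unramified at $\gamma$, and $\varphi^3$ unramified over $\gamma$; set $i := i_0 + 1$, so $\varphi^i(\gamma) = \beta$ and $\varphi^i$ remains unramified at $\gamma$. Since the data $(\beta, \eta, \gamma, i)$ depends only on $\varphi$, $K$, and $n$, I enlarge $K$ once and for all to contain these elements without affecting the final dependence of constants.

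Write $\varphi^i(x) - \beta = (x - \gamma) F(x)/G(x)$ with $F, G \in K[x]$ coprime and $F(\gamma) G(\gamma) \neq 0$, and let $S'$ be a finite set of places of $K$ (depending only on $\varphi$, $K$, and $n$) chosen as in the proof of Proposition~\ref{prop:small m} so that, for $\fp \notin S'$ and $z \in K$ not a zero or pole of $\varphi^i(x) - \beta$, $\ord_\fp(z - \gamma) > 0$ implies $\ord_\fp(\varphi^i(z) - \beta) = \ord_\fp(z - \gamma)$. Enlarge $S'$ further so that, writing $\varphi^n(x) - x = (x - \beta) H(x)$ in $K[x]$, one has $\ord_\fp(H(\beta)) = 0$ for every $\fp \notin S'$; this is possible precisely because $x - \beta$ is a squarefree factor. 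Apply Corollary~\ref{cor:double sequence} (assuming Conjecture~\ref{conj:surface} in the number field case) or Corollary~\ref{cor:double sequence ff} (in the function field case) with $\ell = 3$, $\epsilon = 1$, $a = \alpha$, $b = \gamma$, and iteration count $m - i$. Since $\varphi^3$ is unramified over $\gamma$ we have $d_{3, \gamma} = d^3$, and since $\hhat(\gamma) = 0 \leq \hhat(\alpha)$, for $m$ exceeding a constant depending on $\varphi, K, \tau, n$ I obtain a lower bound
\[
\sum_{\fp \in \cD} N_\fp \;\geq\; (d^3 - 3)\, d^{m - i - 3}\, \tau - O(1),
\]
where $\cD := \{\fp \in M_K^0 : \ord_\fp(\varphi^{m-i}(\alpha) - \gamma) \geq 1\}$ and the $O(1)$ depends only on $\varphi$, $K$, $\tau$, $n$.

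Splitting $\cD = \cD_{(1)} \sqcup (\cD \setminus \cD_{(1)})$ according to whether the valuation is exactly $1$ and combining the lower bound above with the upper bound $\sum_{\fp \in \cD_{(1)}} N_\fp + 2 \sum_{\fp \in \cD \setminus \cD_{(1)}} N_\fp \leq h_K(\varphi^{m-i}(\alpha) - \gamma) \leq d^{m-i} \tau + O(1)$, I conclude
\[
\sum_{\fp \in \cD_{(1)}} N_\fp \;\geq\; (d^3 - 6)\, d^{m - i - 3}\, \tau - O(1).
\]
I then subtract the sizes of three $m$-independent exceptional sets: $S$ (contributing $N_S$); the set $\cE_1$ of primes where $r_\fp(\varphi^{n'}(\beta)) = r_\fp(\beta)$ for some proper divisor $n'$ of $n$, bounded by $O(1)$ via \eqref{eq:new Liouville} since $\beta$ has exact period $n$; and the set $\cE_2$ of primes where $r_\fp(\eta) = r_\fp(\varphi^{n-1}(\beta))$, bounded by $O(1)$ via \eqref{eq:new Liouville} since $\eta \neq \varphi^{n-1}(\beta)$. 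Because the main term grows like $d^m$ while every exceptional set is $O(1)$ in $m$, for $m > C_{64}(\tau, N_S, n)$ I can select $\fp \in \cD_{(1)} \setminus (S \cup S' \cup \cE_1 \cup \cE_2)$, also avoiding the finitely many $m$ for which $\varphi^{m-i}(\alpha) = \infty$.

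For such $\fp$, the conditions $\fp \in \cD_{(1)}$ and $\fp \notin S'$ force $\ord_\fp(\varphi^m(\alpha) - \beta) = 1$, and the factorization $\varphi^n(x) - x = (x - \beta) H(x)$ with $\ord_\fp(H(\beta)) = 0$ then gives $\ord_\fp(\varphi^{m+n}(\alpha) - \varphi^m(\alpha)) = 1$. The condition $\fp \notin \cE_1$ ensures $r_\fp(\beta)$, and hence $r_\fp(\varphi^m(\alpha))$, has exact period $n$ under $\bar\varphi$. Finally, $r_\fp(\varphi^{m-1}(\alpha)) = r_\fp(\eta)$ by tracing $\varphi^{i-1}(\gamma) = \eta$ forward from $\cD_{(1)}$, and $\fp \notin \cE_2$ then gives $r_\fp(\varphi^{m-1}(\alpha)) \neq r_\fp(\varphi^{n-1}(\beta))$, so Lemma~\ref{lem:easy} (applied with $\gamma_1 = \varphi^{n-1}(\beta)$ and $\gamma_2 = \varphi^{m-1}(\alpha)$, both mapping under $\bar\varphi$ to $r_\fp(\beta)$) rules out $\varphi^{m-1}(\alpha)$ being periodic modulo $\fp$. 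Thus $\alpha$ has squarefree portrait $(m, n)$ modulo $\fp$. The main obstacle I foresee is verifying that the auxiliary sets $S'$, $\cE_1$, $\cE_2$ and the hidden $O(1)$'s genuinely depend only on $\varphi, K, \tau, N_S, n$ and not on $m$ or $\alpha$, so that the $d^m$-sized main term ultimately dominates; this is precisely where the uniformity of Corollary~\ref{cor:double sequence} in $\alpha$, and through it the appeal to Conjecture~\ref{conj:surface}, becomes essential.
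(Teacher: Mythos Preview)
Your proposal is correct and follows essentially the same approach as the paper. The only packaging difference is that the paper combines your two factorizations into a single one, writing
\[
\varphi^{n+i}(x)-\varphi^{i}(x)=\frac{(x-\gamma)F(x)}{G(x)}
\]
directly (using that $x-\beta$ is a simple factor of $\varphi^n(x)-x$ and that $\varphi^i$ is unramified at $\gamma$), so that $\ord_\fp(\varphi^{m-i}(\alpha)-\gamma)=1$ yields $\ord_\fp(\varphi^{m+n}(\alpha)-\varphi^m(\alpha))=1$ in one step; your two-step route via $\ord_\fp(\varphi^m(\alpha)-\beta)=1$ is equivalent, though note that $\varphi^n(x)-x$ is a rational function rather than an element of $K[x]$, and your condition $\ord_\fp(H(\beta))=0$ should really be strengthened (by further enlarging $S'$) to ensure $\ord_\fp(H(\varphi^m(\alpha)))=0$ whenever $r_\fp(\varphi^m(\alpha))=r_\fp(\beta)$.
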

\begin{proof}
	The proof uses similar arguments in the
	proof of Proposition~\ref{prop:small m} so we will be 
	brief.
	Let $\beta$ and $\eta$ be as in 
	Definition~\ref{def:A1A2A}(ii). 
	By Lemma~\ref{lem:unramified backward},
	there is an integer $i$ with $1\leq i-1\leq 2d^3$
	and $\gamma\in\bar{K}$ such that
	$\varphi^{i-1}(\gamma)=\eta$, $\varphi^{i-1}$ is 
	unramified
	at $\gamma$ and 
	$\varphi^3$ is unramified over $\gamma$.
	As in the proof of Proposition~\ref{prop:small m},
	since the data $(\beta,\eta,i,\gamma)$
	belongs to a finite set depending only on 
	$\varphi$, $n$, and $K$,
	after extending $K$ if necessary, 
	we may assume that $\gamma\in K$.

	Now $x-\gamma$ is a squarefree factor
	of $\varphi^i(x)-\beta$. Since $x-\beta$
	is a squarefree factor of $\varphi^n(x)-x$, we have:
	$$\varphi^{n+i}(x)-\varphi^{i}(x)=\frac{(x-\gamma)F(x)}{G(x)}$$
	with $F(x),G(x)\in K[x]$, $F(\gamma)G(\gamma)\neq 0$,
	and $\gcd(F,G)=1$. As before, choose $G$ to be monic
	so that the pair $(F,G)$ is uniquely determined.
	
	As in the proof of Proposition~\ref{prop:small m},
	we have a finite subset $S'$ of $M_K^0$
	depending on $K$, $\varphi$, $n$, and $i$
	such that 
	for every $\fp\in M_K^0\setminus S'$
	and every $z\in K$ that is not a zero
	of $(x-\gamma)F(x)G(x)$,
	if $\ord_{\fp}(z-\gamma)>0$
	then $\ord_{\fp}(\varphi^{n+i}(z)-\varphi^i(z))= \ord_{\fp}(z-\gamma)$.
	
	Let $m\in \N_0$ such that
	$m\geq i+3$ and $\infty\neq \varphi^{m-i}(\alpha)$.
	As in the proof of Proposition~\ref{prop:small m},
	we can define the following sets:
	$$\cD_{(1)}:=\{\fp\in M_K^0:\ \ord_{\fp}(\varphi^{m-i}(\alpha)-\gamma)=1\},$$
	$$\cE:=\{\fp\in M_K^0:\ r_{\fp}(\beta)=r_{\fp}(\varphi^{n'}(\beta))\ \text{for some $1\leq n'<n$}\},$$
	$$\cE_1:=\{\fp\in M_K^0:\ r_{\fp}(\eta)=r_{\fp}(\varphi^{n-1}(\beta))\}.$$
	
	Then we can prove that when $m$ is sufficiently large, there is $\fp\in\cD_{(1)}\setminus (\cE\cup\cE_1\cup S'\cup S)$. This gives:
	\begin{itemize}
	\item $\ord_{\fp}(\varphi^{m+n}(\alpha)-\varphi^m(\alpha))=1$ since $\fp\in\cD_{(1)}\setminus S'$.
	\item $r_{\fp}(\varphi^m(\alpha))=r_{\fp}(\varphi^i(\gamma))=r_{\fp}(\beta)$
	which has exact period $n$ since $\fp\notin\cE$.	
	\item $r_{\fp}(\varphi^{m-1}(\alpha))=r_{\fp}(\varphi^{i-1}(\gamma))=r_{\fp}(\eta)\neq r_{\fp}(\varphi^{n-1}(\beta))$, hence
	$\varphi^{m-1}(\alpha)$ is not periodic modulo $\fp$.
	\end{itemize}
	This finishes the proof.
	\end{proof}





\end{document}